\newcommand*\bigcdot{\mathpalette\bigcdot@{.5}}
\newcommand*\bigcdot@[2]{\mathbin{\vcenter{\hbox{\scalebox{#2}{$\m@th#1\bullet$}}}}}
\pgfplotsset{compat=1.8}
\theoremstyle{plain}
\newtheorem{thm}{Theorem}[section]
\newtheorem{lemma}[thm]{Lemma}
\newtheorem{cor}[thm]{Corollary}
\theoremstyle{definition}
\newtheorem{defi}[thm]{Definition}
\newtheorem{example}[thm]{Example}
\def\dim{\mathop{\hbox {dim}}\nolimits}
\def\ch{\mathop{\hbox {ch}}\nolimits}
\def\im{\mathop{\hbox {Im}}\nolimits}
\def\ker{\mathop{\hbox{Ker}}\nolimits}
\newcommand{\frg}{\mathfrak{g}}
\newcommand{\frh}{\mathfrak{h}}
\newcommand{\frk}{\mathfrak{k}}
\newcommand{\fro}{\mathfrak{o}}
\newcommand{\frp}{\mathfrak{p}}
\newcommand{\frs}{\mathfrak{s}}
\newcommand{\frt}{\mathfrak{t}}
\newcommand{\fk}{\mathfrak{k}}
\newcommand{\fp}{\mathfrak{p}}
\newcommand{\bbC}{\mathbb{C}}
\newcommand{\bbN}{\mathbb{N}}
\newcommand{\bbR}{\mathbb{R}}
\newcommand{\bbZ}{\mathbb{Z}}
\newcommand{\caC}{\mathcal{C}}
\newcommand{\caS}{\mathcal{S}}
\newcommand{\caU}{\mathcal{U}}
\newcommand{\be}{\begin {equation}}
\newcommand{\ee}{\end {equation}}
\newcommand{\bp}{\begin {proof}}
\newcommand{\ep}{\end {proof}}
\begin{document}

\title[Dirac cohomology, branching laws and Wallach modules]
{Dirac cohomology, branching laws and Wallach modules}

\author{Chao-Ping Dong}
\address[Dong]{School of Mathematical Sciences, Soochow University, Suzhou 215006,
P.~R.~China}

\email{chaopindong@163.com}

\author{Yongzhi Luan}
\address[Luan]{School of Mathematics, Shandong University, Jinan 250100, P.~R.~China}
\email{luanyongzhi@email.sdu.edu.cn}

\author{Haojun Xu}
\address[Xu]{School of Mathematical Sciences, Soochow University, Suzhou 215006,
P.~R.~China}
\email{20234207003@stu.suda.edu.cn}

\abstract{The idea of using Dirac cohomology to study branching laws was initiated by Huang, Pand\v zi\'c and Zhu in 2013 \cite{HPZ}.  One of their results says that the Dirac cohomology of $\pi$ completely determines $\pi|_{K}$, where $\pi$ is any irreducible unitarizable highest weight $(\frg, K)$ module. This paper aims to develop this idea for the exceptional Lie groups $E_{6(-14)}$ and $E_{7(-25)}$: we recover the $K$-spectrum of the Wallach modules from their Dirac cohomology.}
 \endabstract

\subjclass[2020]{Primary 22E46.}

\keywords{branching laws, Dirac cohomology, Wallach module.}

\maketitle
\section{Introduction}
Let $G$ be a connected simple real Lie group with a maximal compact subgroup $K:=G^{\theta}$, where $\theta$ is a Cartan involution of $G$. Although many results quoted below hold in a wider setting, for simplicity, we assume that the pair $(G, K)$ is Hermitian symmetric. Let $\frg_0$ and $\frk_0$ be the Lie algebra of $G$ and $K$, respectively. Then
$$
\frg_0=\frk_0 +\frp_0
$$
is the Cartan decomposition on the Lie algebra level, and
$$
G=K \, \rm{exp}(\frp_0)
$$
is the Cartan decomposition on the Lie group level.
Fix a maximal torus $T$ of $K$, and let $\frt_0$ be its Lie algebra. Note that $T$ is a maximally compact $\theta$-stable Cartan subgroup of $G$. We will drop the subscript $0$ from a real Lie algebra to denote its complexification. We fix a positive root system $\Delta_{\fk}^+ = \Delta^+(\frk, \frt)$ once for all. Choose a positive roots system $\Delta_{\frg}^+$ of $\Delta(\frg, \frt)$ containing $\Delta_{\fk}^+$. Let $\Delta_{n}^{+}:= \Delta_{\frg}^+\setminus \Delta_{\fk}^+$ and put $\Delta_{n}^{-}=-\Delta_{n}^{+}$. Let $\frp^+=\oplus_{\alpha\in \Delta_{n}^{+}}  \frg_{\alpha}$ and $\frp^-=\oplus_{\alpha\in \Delta_{n}^{-}}  \frg_{\alpha}$. Then $[\frp_+, \frp_+]=[\frp_{-}, \frp_{-}]=0$ and
$$
\frp=\frp_+ \oplus \frp_{-}.
$$
We denote the half sum of positive roots in $\Delta_{\frg}^+$, $\Delta_{\fk}^+$, and $\Delta_n^{+}$ by $\delta$, $\delta_\fk$, and $\delta_n$, respectively.

Let $E_\mu$ be a \emph{$K$-type}, that is, a finite-dimensional irreducible $K$-module with highest weight $\mu$.
The \emph{generalized Verma module} is the $(\frg, K)$-module defined as
\begin{equation}\label{gen-Verma}
N(\mu) \coloneqq \caU(\frg) \otimes _{\caU(\fk \oplus \fp^+)} E_\mu,
\end{equation}
where $\caU(\bigcdot)$ stands for the universal enveloping algebra, and $\fp^+$ acts trivially on $E_\mu$.
Thus as $\frk$-modules, we have
\begin{equation}\label{gen-Verma-k}
N(\mu) \cong S(\frp^-) \otimes  E_\mu.
\end{equation}
The $\frk$-module structure of the symmetric algebra $S(\frp^-)$ was determined by Schmid \cite{S}.

The unique irreducible quotient of $N(\mu)$ containing $E_{\mu}$ is denoted by $L(\mu)$, and is called an \emph{irreducible highest weight module}. Any irreducible $(\frg, K)$-module which is also a highest weight $\frg$-module is isomorphic to some $L(\mu)$, which has infinitesimal character $\mu+\delta$. The unitary ones among these $L(\mu)$ were classified in the 1980s by Enright, Howe, Wallach \cite{EHW}, and by Jakobsen \cite{Ja}. It turns out that, among all the irreducible unitary highest weight modules, the (a few non-trivial) \emph{Wallach modules} are basic ones. For instance, $E_{6(-14)}$ has one Wallach module, while $E_{7(-25)}$ has two. They are unipotent representations, and can not be obtained via cohomological induction.

As we shall state in Section \ref{sec-Dirac}, the Dirac operator for real reductive Lie groups was introduced by Parthasarathy in 1972 \cite{Pa}, and it was further sharpened to be Dirac cohomology by Vogan in 1997 \cite{Vog97}. After the verification of Vogan conjecture by Huang and Pand\v zi\'c in 2002 \cite{HP}, classifying the \emph{Dirac series} (that is, irreducible unitary representations with non-zero Dirac cohomology) became an interesting open problem. As pointed out in \cite{HPP}, Dirac series include all the irreducible unitary highest weight modules when $(G, K)$ is Hermitian symmetric.

Recently, the Dirac series of many groups have been classified. See \cite{BDW} for complex classical Lie groups, and see \cite{DW} for $GL(n, \bbR)$. On the other hand, it was initiated by Huang, Pand\v zi\'c and Zhu about a decade ago that Dirac cohomology can be utilized to study branching laws. One of their results, quoted as Theorem \ref{thm-HPZ} below, says that one can use the Dirac cohomology of $L(\mu)$ to recover the entire $K$-spectrum of $L(\mu)$. This result is quite surprising since among all the $K$-types of $L(\mu)$, only \emph{a few} (namely, the spin lowest $K$-types of $L(\mu)$ in the language of \cite{D}) contribute to $H_D(L(\mu))$. In other words, the (finitely many) spin LKTs of $L(\mu)$ completely determine the entire $K$-spectrum. This paper aims to carry out this idea for $E_{6(-14)}$ and $E_{7(-25)}$. Indeed, after developing some branching results, we shall recover the $K$-spectrum of the Wallach modules from their Dirac cohomology.

The paper is organized as follows: we collect necessary preliminaries on Dirac cohomology in Section \ref{sec-Dirac}. Then we develop some results on decomposing the tensor product of irreducible finite-dimensional modules in  Section \ref{sec-tensor-product}. We investigate $E_{6(-14)}$  in Section \ref{sec-E6} and  $E_{7(-25)}$ in Section \ref{sec-E7}.
Throughout this paper, we denote by $\bbN$ the set of non-negative integers.

\section{Preliminaries on Dirac cohomology}\label{sec-Dirac}

We keep the notations of the introduction.

\subsection{Dirac cohomology}

Let $B(\cdot,\cdot)$ be a nondegenerate invariant symmetric bilinear form on $\frg$. For the universal enveloping algebra $\caU (\frg)$ of $\frg$, and the Clifford algebra $\caC(\fp)$ of $\fp$ with respect to the bilinear form $B(\cdot, \cdot)$, the \emph{Dirac operator} introduced by Parthasarathy \cite{Pa} was defined as follows
\begin{equation}
    D \coloneqq \sum_{i=1}^{n} Z_i \otimes Z_i \, \in \caU(\frg) \otimes \caC(\frp) \,,
\end{equation}
where $\{ Z_1, \ldots , Z_n \}$ is an orthonormal basis of $\fp$ with respect to the bilinear form $B(\cdot, \cdot)$. Note that the Dirac operator $D$ is independent of the choice of the orthonormal basis $\{ Z_1, \ldots , Z_n \}$.

We denote the spinor module of the Clifford algebra $\caC(\fp)$ by $\caS$.
For any $(\frg, K)$-module $X$, the action of $\caU(\frg) \otimes \caC(\fp)$ on $X \otimes \caS$ is defined by
\[(u \otimes a) \cdot (x \otimes s) \coloneqq ux \otimes as \,, \]
where $u \in \caU(\frg)$, $a \in \caC(\fp)$, $x \in X$ and $s \in \caS$.
In fact, if we denote the spin double covering group of $K$ by $\tilde{K}$, then $X \otimes \caS$ is a $\left(\caU(\frg) \otimes \caC(\fp), \tilde{K} \right)$-module. See Lemma 3.2.2 of \cite{HP2} for details.

Since the Dirac operator $D$ is in $\caU(\frg) \otimes \caC(\fp)$, we know that $D$ acts on $X \otimes \caS$. Moreover, the action  $D: \, X \otimes \caS \longrightarrow X \otimes \caS$ is a $\tilde{K}$-module homomorphism.

Now \emph{Dirac cohomology} of $X$, as defined by Vogan \cite{Vog97}, is the following $\tilde{K}$-module
\begin{equation}\label{Dirac-coho}
H_D (X) \coloneqq \ker D / ( \im D \cap \ker D).
\end{equation}
If $H_D (X)$ is nonzero, the well-known Vogan conjecture says that the infinitesimal character of $X$ should be determined by $H_D (X)$. It was confirmed by Huang and Pand\v{z}i\'{c} in 2002 \cite{HP}.

\begin{thm}\emph{(\cite{HP})}\label{thm-HP}
Let $X$ be an irreducible $(\mathfrak{g}, K)$-module. If the Dirac cohomology $H_D(X)$ contains a $\tilde{K}$-type $E_{\mu}$ of highest weight $\mu \in \mathfrak{t}^{*}$, then the infinitesimal character of $X$ is $\mu + \delta_{\mathfrak{k}}$.
\end{thm}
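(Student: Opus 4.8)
The plan is to carry out the algebraic argument of Huang and Pand\v zi\'c. The first ingredient is the \emph{diagonal copy} $\frk_{\Delta}$ of $\frk$ inside $A\coloneqq\caU(\frg)\otimes\caC(\fp)$: since $\ad(\frk)$ preserves the restriction of $B$ to $\fp$, it factors through $\frso(\fp)$, which embeds in $\caC(\fp)$ as a shift of $\wedge^{2}\fp$; writing $\alpha\colon\frk\to\caC(\fp)$ for the composite, one sets $\frk_{\Delta}\coloneqq\{X\otimes 1+1\otimes\alpha(X)\,:\,X\in\frk\}$, and this is precisely the Lie algebra through which $\tilde K$ acts on $X\otimes\caS$. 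In particular the center $Z(\caU(\frk_{\Delta}))$ acts on $X\otimes\caS$, commutes with $D$, and descends to $H_{D}(X)$; and on a $\tilde K$-type $E_{\mu}\subseteq H_{D}(X)$ it must act by the scalars of the $\frk$-infinitesimal character $\mu+\delta_{\fk}$ of $E_{\mu}$, that is, by $p\mapsto(\eta_{\frk}(p))(\mu+\delta_{\fk})$ under the Harish-Chandra isomorphism $\eta_{\frk}\colon Z(\caU(\frk_{\Delta}))\to P(\frt^{*})^{W_{\fk}}$.

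The heart of the matter is an identity in $A$, Vogan's conjecture: there is an algebra homomorphism $\zeta\colon Z(\caU(\frg))\to Z(\caU(\frk_{\Delta}))$ such that
\[
z\otimes 1 \;=\; \zeta(z)+Da_{z}+a_{z}D\qquad\text{for suitable }a_{z}\in A\ \ \bigl(z\in Z(\caU(\frg))\bigr),
\]
and such that $\eta_{\frk}\circ\zeta$ agrees with $\eta_{\frg}$ followed by the tautological inclusion $P(\frt^{*})^{W}\hookrightarrow P(\frt^{*})^{W_{\fk}}$. The route I would take is the standard one. Parthasarathy's formula, which writes $D^{2}$ as $-\,\Omega_{\frg}\otimes 1+\Omega_{\frk_{\Delta}}$ plus an explicit scalar (here $\Omega_{\frg}$ and $\Omega_{\frk_{\Delta}}$ are the Casimir elements), settles the case $z=\Omega_{\frg}$ and identifies $\zeta(\Omega_{\frg})$ with $\Omega_{\frk_{\Delta}}$ up to a scalar. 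For a general $z$ one equips $A$ with the $\Z/2$-grading coming from $\caC(\fp)$, forms the operator $d(a)\coloneqq Da-(-1)^{\deg a}aD$ on the subalgebra $A^{\frk_{\Delta}}$ of $\frk_{\Delta}$-invariants (one checks $d^{2}=0$ there, because $D^{2}$ is central in $A^{\frk_{\Delta}}$), observes that every $z\otimes 1$ is a $d$-cocycle, and then proves that the inclusion $Z(\caU(\frk_{\Delta}))\hookrightarrow A^{\frk_{\Delta}}$ induces an algebra isomorphism onto the cohomology $H(A^{\frk_{\Delta}},d)$. This one statement yields both the existence and the uniqueness of $\zeta(z)$, and the Harish-Chandra compatibility is then pinned down by comparing top-degree symbols.

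Granting the identity, the theorem follows quickly. Being irreducible, $X$ has an infinitesimal character $\Lambda\in\frt^{*}$, so each $z\in Z(\caU(\frg))$ acts on $X$, hence on $X\otimes\caS$ and on $H_{D}(X)$, by the scalar $(\eta_{\frg}(z))(\Lambda)$. On $H_{D}(X)=\ker D/(\im D\cap\ker D)$ the operators $z\otimes 1$ and $\zeta(z)$ induce the same map, because their difference $Da_{z}+a_{z}D$ kills $H_{D}(X)$; hence $\zeta(z)$ also acts on $H_{D}(X)$ by $(\eta_{\frg}(z))(\Lambda)$. Restricting to $E_{\mu}\subseteq H_{D}(X)$ and invoking the first paragraph, $\zeta(z)$ acts there by $(\eta_{\frk}(\zeta(z)))(\mu+\delta_{\fk})=(\eta_{\frg}(z))(\mu+\delta_{\fk})$. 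Therefore $(\eta_{\frg}(z))(\Lambda)=(\eta_{\frg}(z))(\mu+\delta_{\fk})$ for every $z$, and since $\eta_{\frg}$ maps onto $P(\frt^{*})^{W}$ this forces $\Lambda\in W\cdot(\mu+\delta_{\fk})$; that is, $X$ has infinitesimal character $\mu+\delta_{\fk}$.

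The step I expect to be the real obstacle is the cohomology computation $H(A^{\frk_{\Delta}},d)\cong Z(\caU(\frk_{\Delta}))$, which is the precise content of Vogan's conjecture; everything else is routine (that $d^{2}=0$ on $A^{\frk_{\Delta}}$, that each $z\otimes 1$ is a cocycle, that $Da_{z}+a_{z}D$ dies in Dirac cohomology, and the Harish-Chandra bookkeeping). For the cohomology itself I would filter $\caU(\frg)$ by degree so that $\gr A=S(\frg)\otimes\caC(\fp)$ and $d$ acquires a Koszul-type symbol $d_{0}$, compute $H\bigl((\gr A)^{\frk_{\Delta}},d_{0}\bigr)$ using the Koszul resolution attached to the $\frk$-module $\fp$ together with classical invariant theory, and then run the associated spectral sequence to transport the answer back to the filtered algebra. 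It is this associated-graded step, where the invariant theory enters, that carries essentially all the weight of the argument.
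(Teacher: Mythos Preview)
The paper does not prove this theorem; it is quoted from \cite{HP} (Huang--Pand\v zi\'c) and used as a black box. Your sketch is a faithful outline of the original Huang--Pand\v zi\'c argument: the diagonal embedding $\frk_\Delta$, the differential $d$ on $(\caU(\frg)\otimes\caC(\fp))^{\frk_\Delta}$, the cohomology computation $H\cong Z(\caU(\frk_\Delta))$ via passage to the associated graded and a Koszul-type argument, and the Harish-Chandra comparison at the end are exactly the steps in \cite{HP}, so there is nothing to contrast.
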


It is interesting to note that when $X$ is unitary, the $\widetilde{K}$-modules $\im D$ and $\ker D$ intersect trivially and $H_D (X) = \ker D = \ker D^2$.

Because the dimension of the parabolic subalgebra $\fp$ is an even number, we can decompose the spinor module $\caS$ as the following $\fk$-modules
\[\caS = \caS^+ \oplus \caS^- \,,\]
where $\caS^+$ ($\caS^-$, respectively) is the direct sum of the even (odd, respectively) power of $\caS \simeq \bigwedge \fp^+$.
For any irreducible unitary $(\mathfrak{g},K)$-module $X$, we consider the following $\tilde{K}$-equivariant operator
\[ D^{\pm} : X \otimes \caS^{\pm} \longrightarrow X \otimes \caS^{\mp} \,,\]
which comes from the restriction of $D$
\[X \otimes \caS^{+} \xrightarrow{D} X \otimes \caS^- \xrightarrow{D} X \otimes \caS^{+} \,.\]
Since $\ker D \cap \im D = \{0\}$, we obtain
\begin{align*}
    X \otimes \caS & = \ker D \oplus \im D \,,\\
    X \otimes \caS^+ & = \ker D^+ \oplus \im D^- \,,\\
    X \otimes \caS^- & = \ker D^- \oplus \im D^+ \,,\\
\end{align*}
and an isomorphism
\[ D^{\pm} : \, \im D^{\mp} \longrightarrow \im D^{\pm} \,.\]
Let us define
\[ H_{D}^{\pm} = \ker D^{\pm} \,.\]
Then we have
\[H_D = H_D^{+} \oplus H_D^- \,.\]

\subsection{Dirac cohomology and $K$-character}
\begin{defi}\cite[page 1259]{HPZ}
Assume the $K$-type decomposition of the admissible $(\frg, K)$-module $X$ is
\[X = \bigoplus_{\lambda} \, m_\lambda E_{\lambda} \,.\]
The \emph{$K$-character} of $X$ is the following formal series
\begin{equation}
    \ch_K X = \sum_{\lambda} m_\lambda \ch_K E_\lambda \,,
\end{equation}
where $\ch_K E_\lambda$ is the character of the irreducible $K$-module $E_\lambda$.
\end{defi}
We will often work on the $\tilde{K}$-module in the following contents, but we still use $\ch_K$ to denote the corresponding $\tilde{K}$ character.

\begin{thm}\emph{\cite[Theorem A]{HPZ}}\label{thm-HPZ}
Let $N(\mu)$ be the generalized Verma module defined as in \eqref{gen-Verma}, and define the irreducible highest weight module $L(\mu)$ as in the introduction. Assume that  $L(\mu)$ is unitary. Let
\begin{equation}\label{eq:HPZthmAHD}
    H_D^+ (L(\mu)) = \sum_{\xi} E_\xi,\quad H_D^- (L(\mu)) = \sum_{\eta} E_\eta,
\end{equation}
where the summing indices $\xi$ and $\eta$ run over some finite subsets of the dominant $\fk$-weights. Then
\begin{equation}\label{eq:HPZthmA}
    \ch_K L(\mu) = \sum_{\xi} \ch_K N(\xi - \delta_n) - \sum_{\eta} \ch_K N(\eta - \delta_n).
\end{equation}
Here the summing indices $\xi$ and $\eta$ are the same as in \eqref{eq:HPZthmAHD}.
\end{thm}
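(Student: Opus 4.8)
The plan is to compare the Dirac cohomology of $L(\mu)$ with a Dirac-type Euler characteristic built out of generalized Verma modules, and to exploit the fact that both sides behave nicely under the tensor-product-with-spinor construction. First I would form the virtual $\tilde K$-module $L(\mu)\otimes\caS$ and compute its $K$-character in two ways. On one hand, since $X=L(\mu)$ is unitary, we have the orthogonal decomposition $X\otimes\caS=\ker D\oplus\im D$ with $D^{\pm}:\im D^{\mp}\xrightarrow{\sim}\im D^{\pm}$, so the ``super'' character $\ch_K(X\otimes\caS^+)-\ch_K(X\otimes\caS^-)$ collapses to $\ch_K H_D^+(X)-\ch_K H_D^-(X)=\sum_\xi\ch_K E_\xi-\sum_\eta\ch_K E_\eta$. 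On the other hand, $\ch_K(X\otimes\caS^+)-\ch_K(X\otimes\caS^-)=\ch_K X\cdot\bigl(\ch_K\caS^+-\ch_K\caS^-\bigr)$, and the Weyl-denominator-type identity for $\caS\cong\textstyle\bigwedge\frp^+$ (the Euler characteristic of the exterior algebra) gives a clean factorization. Combining the two computations expresses $\sum_\xi\ch_K E_\xi-\sum_\eta\ch_K E_\eta$ as $\ch_K L(\mu)$ times an explicit invertible factor.

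The second step is to recognize that this same invertible factor appears when one takes the ``Dirac cohomology'' of a generalized Verma module $N(\lambda)$ itself. Concretely, as a $\frk$-module $N(\lambda)\cong S(\frp^-)\otimes E_\lambda$, and $S(\frp^-)$ is dual (up to a one-dimensional twist by $2\delta_n$) to the formal inverse of $\bigwedge\frp^+$ at the level of characters. Thus $\ch_K N(\lambda)$ times the Euler characteristic of $\bigwedge\frp^+$ equals, up to the shift by $-\delta_n$ built into the indexing, simply $\ch_K E_{\lambda+\delta_n}\cdot(\text{unit})$; equivalently, $\ch_K\bigl(N(\xi-\delta_n)\otimes\caS^+\bigr)-\ch_K\bigl(N(\xi-\delta_n)\otimes\caS^-\bigr)=\ch_K E_\xi$ (a single $\tilde K$-type), because the alternating sum over $\bigwedge\frp^+$ cancels the generating function for $S(\frp^-)$ term by term. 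Summing over $\xi$ and subtracting the sum over $\eta$, the right-hand side of the desired identity \eqref{eq:HPZthmA}, tensored with $\caS^+$ minus tensored with $\caS^-$, becomes exactly $\sum_\xi\ch_K E_\xi-\sum_\eta\ch_K E_\eta$.

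The third step is to conclude by cancellation: both $\ch_K L(\mu)$ and $\sum_\xi\ch_K N(\xi-\delta_n)-\sum_\eta\ch_K N(\eta-\delta_n)$, after multiplying by the fixed nonzero formal factor $\ch_K\caS^+-\ch_K\caS^-$, yield the same element $\sum_\xi\ch_K E_\xi-\sum_\eta\ch_K E_\eta$ of the (completed) representation ring. Since that factor is not a zero-divisor in the relevant ring of formal $K$-characters — this is where one invokes that $\ch_K\caS^+-\ch_K\caS^-$ is, up to a unit, the Weyl-type product $\prod_{\alpha\in\Delta_n^+}(e^{\alpha/2}-e^{-\alpha/2})$, which is a regular element — we may cancel it and obtain \eqref{eq:HPZthmA}. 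One should also check that the infinitesimal character bookkeeping is consistent: by Theorem \ref{thm-HP} every $E_\xi$ and $E_\eta$ occurring in $H_D(L(\mu))$ has $\xi+\delta_\frk=\eta+\delta_\frk=\mu+\delta$, so each $N(\xi-\delta_n)$ has infinitesimal character $(\xi-\delta_n)+\delta=\xi+\delta_\frk=\mu+\delta$, matching that of $L(\mu)$; this guarantees the identity lives inside a single infinitesimal-character block, where the formal manipulations with possibly infinite $K$-multiplicities are legitimate.

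The main obstacle I anticipate is making the formal-series cancellation rigorous: the characters $\ch_K L(\mu)$ and $\ch_K N(\lambda)$ are infinite sums, so one is working in a completion of the representation ring of $K$ (or of the group algebra of the weight lattice), and one must argue that multiplication by $\ch_K\caS^+-\ch_K\caS^-$ is injective on the relevant completed module. The cleanest route is to pass to a localization where $\prod_{\alpha\in\Delta_n^+}(e^{\alpha/2}-e^{-\alpha/2})$ becomes invertible, or to restrict attention to characters supported on a single infinitesimal character so that the relevant sums are locally finite and the Weyl-group/denominator argument of Parthasarathy–Vogan–Huang–Pand\v zi\'c applies verbatim. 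Once that framework is fixed, every remaining step is the routine character identity for $S(\frp^-)$ versus $\bigwedge\frp^+$ together with the unitarity-driven collapse of the Dirac Euler characteristic to $H_D^+-H_D^-$.
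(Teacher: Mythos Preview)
The paper does not prove this theorem: it is quoted verbatim as \cite[Theorem A]{HPZ}, with only the remark that the original lowest-weight formulation carries over to the highest-weight setting by replacing $+\delta_n$ with $-\delta_n$. So there is no in-paper proof to compare against.

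That said, your sketch is essentially the argument one finds in \cite{HPZ}. The two key identities are exactly the ones you isolate: (i) for unitary $X$ the Dirac index $\ch_K(X\otimes\caS^+)-\ch_K(X\otimes\caS^-)$ collapses to $\ch_K H_D^+(X)-\ch_K H_D^-(X)$, and (ii) the Koszul/Weyl identity $\ch_K S(\frp^-)\cdot(\ch_K\caS^+-\ch_K\caS^-)=e^{\delta_n}$, which combined with $N(\lambda)\cong S(\frp^-)\otimes E_\lambda$ and the fact that $\delta_n$ is $W(\frk,\frt)$-invariant gives $\ch_K N(\xi-\delta_n)\cdot(\ch_K\caS^+-\ch_K\caS^-)=\ch_K E_\xi$. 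Cancelling the regular element $\ch_K\caS^+-\ch_K\caS^-=\prod_{\alpha\in\Delta_n^+}(e^{\alpha/2}-e^{-\alpha/2})$ then yields \eqref{eq:HPZthmA}. Your caution about working in a suitable completion so that the cancellation is legitimate is well placed; in \cite{HPZ} this is handled by working with formal characters supported in a cone (all weights of $L(\mu)$ and of each $N(\xi-\delta_n)$ lie in $\mu-\sum_{\alpha\in\Delta_n^+}\bbN\alpha$ up to $W(\frk,\frt)$), where multiplication by the finite product $\prod(e^{\alpha/2}-e^{-\alpha/2})$ is visibly injective. The infinitesimal-character remark you make is correct but not strictly needed for the formal argument.
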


Originally, Theorem \ref{thm-HPZ} was stated for lowest weight modules. Yet it holds for highest weight modules as well: one just needs to change $+\delta_n$ to be $-\delta_n$.

We emphasize that by Theorem \ref{thm-HP}, to compute $H_D(L(\mu))$, one only needs to look at the finitely many spin-lowest $K$-types of $L(\mu)$. Now Theorem \ref{thm-HPZ} says that we can get all the $K$-types of $L(\mu)$ from its spin-lowest $K$-types.

\section{Tensor product of finite-dimensional irreducible modules}\label{sec-tensor-product}

Let $\frg$ be a finite-dimensional simple Lie algebra over $\bbC$.
Fix a Cartan subalgebra $\frh$ of $\frg$. Then we have root system $\Delta(\frg, \frh)$. Let $W$ be the Weyl group of $\Delta(\frg, \frh)$.
We fix a positive root system $\Delta^+(\frg, \frh)$. Let $\Pi=\{\alpha_1, \alpha_2, \dots, \alpha_l\}$ be the corresponding simple roots, and let $\varpi_1, \varpi_2, \dots, \varpi_l$ be the corresponding fundamental weights. Let $\delta$ be half sum of the positive roots.
For integers $n_1, n_2, \dots, n_l$, we will use $[n_1, n_2, \dots, n_l]$ to stand for the vector $n_1\varpi_1+n_2\varpi_2+\cdots+n_l\varpi_l$. For instance, $\delta=[1,1, \dots, 1]$.

Let $E_{\lambda'}$ and $E_{\lambda^{\prime\prime}}$ be two irreducible finite-dimensional $\frg$-modules with highest weights $\lambda^\prime$ and $\lambda^{\prime\prime}$, respectively. This section aims to prepare results on the decomposition of $E_{\lambda^\prime}\otimes E_{\lambda^{\prime\prime}}$. The root systems that we adopt here are the same as those in Knapp \cite{Kn}. To make the following statements concise, we introduce the notation $\widetilde{E}_{\lambda}$
to stand for the $\frg$-module $E_{\lambda}$ if $\lambda$ is dominant integral with respect to $\Delta^+(\frg, \frh)$, and to stand for the zero module otherwise.

For an arbitrary vector $\mu\in\frh^*$, we denote by $\{\mu\}$ the unique element in $\frh^*$ which is dominant for $\Delta^+(\frg, \frh)$ and is conjugate to $\mu$ under the action of $W$.
We define $t(\mu)$ to be 0 if $\mu$ is singular for $\Delta(\frg, \frh)$; otherwise, there exists a unique $\sigma\in W$ such that $\sigma\mu=\{\mu\}$. In this case, we define $t(\mu)$ to be the sign of $\sigma$, that is, $(-1)^{l(\sigma)}$, where $l(\sigma)$ stands for the length of $\sigma$. The following result is Exercise 9 of \cite[Section 24]{Hum}.

\begin{lemma}\label{lemma-tensor-prod}
Let $\lambda'$ and $\lambda^{\prime\prime}$ be two dominant integral weights w.r.t. $\Delta^+(\frg, \frh)$.
We have that
\begin{equation}\label{tensor-prod-summands}
E_{\lambda'}\otimes E_{\lambda^{\prime\prime}}=\bigoplus_{\lambda\in \Pi(\lambda') }  m_{\lambda}\,   t(\lambda + \lambda^{\prime\prime}+\delta ) \, E_{\{\lambda+\lambda^{\prime\prime}+\delta\}-\delta}.
\end{equation}

\end{lemma}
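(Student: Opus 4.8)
The plan is to prove \eqref{tensor-prod-summands} first as an identity of formal characters in the group ring $\bbZ[\frh^*]$ (this is a form of Klimyk's formula), and then upgrade it to an identity of modules. First I would write $\ch E_{\lambda'}=\sum_{\lambda\in\Pi(\lambda')} m_\lambda\, e^{\lambda}$, where $\Pi(\lambda')$ is the set of weights of $E_{\lambda'}$ and $m_\lambda=\dim E_{\lambda'}[\lambda]$, and apply the Weyl character formula to $E_{\lambda^{\prime\prime}}$, namely $\ch E_{\lambda^{\prime\prime}}=\big(\sum_{w\in W}(-1)^{l(w)}e^{w(\lambda^{\prime\prime}+\delta)}\big)\big/\big(\sum_{w\in W}(-1)^{l(w)}e^{w\delta}\big)$. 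Multiplying the two expansions gives
\[
\ch E_{\lambda'}\cdot\ch E_{\lambda^{\prime\prime}}=\frac{1}{\sum_{w\in W}(-1)^{l(w)}e^{w\delta}}\sum_{\lambda\in\Pi(\lambda')}m_\lambda\sum_{w\in W}(-1)^{l(w)}e^{\lambda+w(\lambda^{\prime\prime}+\delta)}.
\]

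The next step is to reindex the double sum. Since the set of weights of $E_{\lambda'}$ is $W$-stable with $m_{w\lambda}=m_\lambda$ for all $w\in W$, substituting $\lambda\mapsto w\lambda$ in the inner sum turns the exponent $\lambda+w(\lambda^{\prime\prime}+\delta)$ into $w(\lambda+\lambda^{\prime\prime}+\delta)$, so the right-hand side becomes $\sum_{\lambda\in\Pi(\lambda')}m_\lambda\,\chi_{\lambda+\lambda^{\prime\prime}}$, where for $\nu\in\frh^*$ I set
\[
\chi_\nu:=\frac{\sum_{w\in W}(-1)^{l(w)}e^{w(\nu+\delta)}}{\sum_{w\in W}(-1)^{l(w)}e^{w\delta}}.
\]
I then want the elementary lemma on these ``virtual Weyl characters'': using $(-1)^{l(w)}=\det w$ and the antisymmetry of the Weyl denominator, one checks that $\chi_\nu=0$ when $\nu+\delta$ is singular (pair up $w$ with $ws_\alpha$ for a root $\alpha$ with $\langle\nu+\delta,\alpha^\vee\rangle=0$), while $\chi_\nu=(-1)^{l(\sigma)}\ch E_{\{\nu+\delta\}-\delta}$ when $\nu+\delta$ is regular, where $\sigma(\nu+\delta)=\{\nu+\delta\}$ (reindex $w\mapsto w\sigma$ in the numerator and recognize the Weyl numerator of $\{\nu+\delta\}-\delta$). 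In the notation of the statement this is precisely $\chi_\nu=t(\nu+\delta)\,\ch\widetilde{E}_{\{\nu+\delta\}-\delta}$, and putting $\nu=\lambda+\lambda^{\prime\prime}$ produces the summand $m_\lambda\,t(\lambda+\lambda^{\prime\prime}+\delta)\,\ch\widetilde{E}_{\{\lambda+\lambda^{\prime\prime}+\delta\}-\delta}$, hence \eqref{tensor-prod-summands} at the level of characters.

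Finally, because the characters of the finite-dimensional irreducible $\frg$-modules are linearly independent in $\bbZ[\frh^*]$, this identity of characters is equivalent to the asserted identity in the representation ring; since the left-hand side is a genuine module, the terms with $t=-1$ on the right must cancel against others to leave an honest decomposition. The only part I expect to require care is the reindexing step together with the tracking of the signs $(-1)^{l(w)}$ when the $W$-action is moved across the Weyl denominator; everything else is a direct application of the Weyl character formula. Since the statement is Exercise~9 of \cite[Section~24]{Hum}, one may alternatively just cite it, but the argument above is short enough to record in full.
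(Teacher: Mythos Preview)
Your argument is correct; it is the standard derivation of Klimyk's (equivalently Brauer's) formula from the Weyl character formula, exactly the content of the cited exercise in \cite{Hum}. There is nothing to compare against, however: the paper does not prove Lemma~\ref{lemma-tensor-prod} at all but simply records it as Exercise~9 of \cite[Section~24]{Hum} and then uses it as a computational tool. Your write-up is therefore more detailed than what the paper provides; if you wish to match the paper's treatment you may simply cite Humphreys, but including the short proof you outline does no harm.
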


\subsection{Type $D_n$}\label{sec-typeD}

Now we take $\frg$ to be $\frs\fro(2n, \bbC)$, which is of type $D_n$.

\begin{thm}\label{thm-branching-Dn}
Let $\frg$ be type $D_n$, and let $a_1, a_{n-1}\in\bbN$. Then we have that
\begin{itemize}
    \item [(a)] The tensor product $E_{[1, 0, \cdots, 0]}\otimes E_{[a_1, 0, \cdots,0, a_{n-1}, 0]}$ is multiplicity-free and consists of $\widetilde{E}_{[a_1, 0, \cdots,0, a_{n-1}, 0]+\nu}$, where $\nu$ runs over $\varpi_{1}$, $-\varpi_{1}$, $-\varpi_{n-1}+\varpi_n$, $-\varpi_1+\varpi_2-\varpi_{n-1}$.
     \item [(b)] The tensor product $E_{[0,  \cdots, 0, 1, 0]}\otimes E_{[a_1, 0, \cdots, 0, a_{n-1}, 0]}$ is multiplicity-free and consists of $\widetilde{E}_{[a_1, 0, \cdots,0, a_{n-1}, 0]+\nu}$, where $\nu$ runs over
     \begin{itemize}
         \item[$\bullet$]   $\varpi_{n-1}$, $-\varpi_{1}+\varpi_{n}$, $\varpi_{n-2i}-\varpi_{n-1}$ for $1\leq i < \frac{n}{2}$; $-\varpi_{1}+\varpi_{n-2i+1}-\varpi_{n-1}$ for $1< i \leq \frac{n}{2}$ if $n$ is odd;
         \item[$\bullet$]  $\varpi_{n-1}$, $-\varpi_{1}+\varpi_{n}$, $\varpi_{n-2i}-\varpi_{n-1}$ for $1\leq i <\frac{n}{2}$; $-\varpi_{1}+\varpi_{n-2i+1}-\varpi_{n-1}$ for $1< i \le \frac{n}{2}$ if $n$ is even;
     \end{itemize}
\item [(c)] The tensor product $E_{[0,  \cdots,  0, 1]}\otimes E_{[a_1, 0, \cdots, 0, a_{n-1},0]}$ is multiplicity-free and consists of
         $\widetilde{E}_{[a_1, 0, \cdots, 0, a_{n-1},0] + \nu}$, where $\nu$ runs over
        \begin{itemize}
         \item[$\bullet$]   $\varpi_{n}$, $-\varpi_{1}+\varpi_{n-1}$, $-\varpi_{1}+\varpi_{n-2i}-\varpi_{n-1}$ for $1\leq i < \frac{n}{2}$; $\varpi_{n-2i+1}-\varpi_{n-1}$ for $1< i \leq \frac{n}{2}$ if $n$ is odd;
         \item[$\bullet$]  $\varpi_{n}$, $-\varpi_{1}+\varpi_{n-1}$, $-\varpi_{1}+\varpi_{n-2i}-\varpi_{n-1}$ for $1\leq i <\frac{n}{2}$; $\varpi_{n-2i+1}-\varpi_{n-1}$ for $1< i \le \frac{n}{2}$ if $n$ is even.
     \end{itemize}
    \end{itemize}
\end{thm}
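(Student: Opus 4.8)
The plan is to compute all three tensor products via Lemma~\ref{lemma-tensor-prod}, which reduces the problem to enumerating the weights $\Pi(\lambda')$ of a single small ``vector'' representation $E_{\lambda'}$ (the standard module $E_{[1,0,\dots,0]}$ in part (a), and one of the two half-spin modules $E_{[0,\dots,0,1,0]}$, $E_{[0,\dots,0,0,1]}$ in parts (b) and (c)), each with multiplicity $m_\lambda=1$, and then for each weight $\lambda$ determining $t(\lambda+\lambda''+\delta)$ and $\{\lambda+\lambda''+\delta\}-\delta$, where $\lambda''=[a_1,0,\dots,0,a_{n-1},0]$. Throughout I will work in the standard orthonormal coordinates $e_1,\dots,e_n$ for $D_n$ from Knapp~\cite{Kn}, where $\Delta^+=\{e_i\pm e_j: i<j\}$, $\delta=(n-1,n-2,\dots,1,0)$, the fundamental weights are $\varpi_k=e_1+\dots+e_k$ for $k\le n-2$, $\varpi_{n-1}=\tfrac12(e_1+\dots+e_{n-1}-e_n)$, $\varpi_n=\tfrac12(e_1+\dots+e_{n-1}+e_n)$, and the Weyl group acts by permutations and even sign changes of the coordinates.

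First I would record the weights of the three modules: $E_{[1,0,\dots,0]}$ has weights $\pm e_i$ ($1\le i\le n$), each once; $E_{[0,\dots,0,1,0]}$ (one half-spin module) has the $2^{n-1}$ weights $\tfrac12(\pm e_1\pm\dots\pm e_n)$ with an even number of minus signs; $E_{[0,\dots,0,0,1]}$ has those with an odd number of minus signs; all multiplicities are $1$, so $E_{\lambda'}\otimes E_{\lambda''}$ is \emph{a priori} a signed sum of the $\widetilde{E}_{\{\lambda+\lambda''+\delta\}-\delta}$ over these $\lambda$. Next, writing $\lambda''+\delta$ in coordinates, I would add each weight $\lambda$ and analyse the resulting vector $v=\lambda+\lambda''+\delta$: (i) if two coordinates of $v$ coincide in absolute value then $v$ is singular and $t=0$, contributing nothing; (ii) otherwise I sort the absolute values into decreasing order by a permutation $\sigma_1$ and apply an even sign change $\sigma_2$ to make the entries nonnegative (or, in the last slot, possibly negative if the number of sign flips forces it — this is exactly the split that produces the $\pm\varpi_n$ dichotomy), record $t(v)=\operatorname{sgn}(\sigma_2\sigma_1)$, and subtract $\delta$ to read off $\{v\}-\delta=\lambda''+\nu$ for the appropriate $\nu$. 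The parity of $a_1$ and $a_{n-1}$ (equivalently whether $\lambda''+\delta$ has its small entries clustered) is what produces the $n$ even/odd case distinction in (b) and (c); in (a) no such distinction survives because the perturbations $\pm e_i$ are integral and large.

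The key simplification, which makes the bookkeeping tractable, is that $\lambda''$ is supported only on $\varpi_1$ and $\varpi_{n-1}$, so $\lambda''+\delta=(a_1+n-1+\tfrac{a_{n-1}}2,\, n-2+\tfrac{a_{n-1}}2,\, n-3+\tfrac{a_{n-1}}2,\dots,\, 1+\tfrac{a_{n-1}}2,\, -\tfrac{a_{n-1}}2)$; the first coordinate is separated from the rest by the gap $a_1$, and the remaining coordinates $n-2+\tfrac{a_{n-1}}2,\dots,1+\tfrac{a_{n-1}}2$ form a consecutive decreasing run, with a last coordinate $-\tfrac{a_{n-1}}2$ whose absolute value falls inside that run. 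Adding a half-spin weight $\tfrac12(\pm e_1\pm\dots\pm e_n)$ shifts every coordinate by $\pm\tfrac12$; a coordinate shared absolute value (hence singularity) arises precisely when the perturbation lands on the boundary of the consecutive run or collides with the reflected last coordinate, and otherwise the sorting permutation moves a single entry past a contiguous block, so its length — hence the sign $t$ — is an explicit arithmetic progression in the position, which is where the family $\varpi_{n-2i}-\varpi_{n-1}$ (resp.\ $-\varpi_1+\varpi_{n-2i+1}-\varpi_{n-1}$) with its range $1\le i<\tfrac n2$ (resp.\ $1<i\le\tfrac n2$) comes from; one checks separately that every surviving term has $t=+1$, so the tensor product is genuinely multiplicity-free rather than merely a signed sum.

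The main obstacle I anticipate is the careful case analysis for the half-spin modules in (b) and (c): one must (1) identify exactly which of the $2^{n-1}$ sign patterns give a singular $v$ (these cancel out of the formula and must be shown to be the complement of the listed $\nu$'s), (2) track the parity constraint ``even number of minus signs'' through the even-sign-change normalisation so that the half-spin weight $\nu$ lands in the correct component $\varpi_{n-1}$ vs.\ $\varpi_n$ — this is the source of the $-\varpi_{n-1}$ vs.\ $+\varpi_n$ split in each bullet — and (3) handle the parity of $n$ separately, since when $n$ is even the reflected last coordinate $-\tfrac{a_{n-1}}2$ can, after perturbation, coincide in absolute value with a run entry for a different set of indices than when $n$ is odd, shifting the endpoints of the $i$-ranges by one. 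Part (a) is comparatively routine: the four perturbations $\varpi_1=e_1$, $-\varpi_1=-e_1$, $-\varpi_{n-1}+\varpi_n=e_n$, $-\varpi_1+\varpi_2-\varpi_{n-1}+\varpi_n = e_2 - e_1 + \text{(nothing else)}$, wait — rather, among the $2n$ weights $\pm e_i$ only four survive non-singular, and for those the sorting permutation is short and its sign is directly $+1$, so I would dispatch (a) first as a warm-up and then import the same normalisation conventions into (b) and (c).
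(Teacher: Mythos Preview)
Your overall strategy---apply Lemma~\ref{lemma-tensor-prod} to each of the three ``small'' modules, list their weights (all multiplicity one), and analyse $\lambda+\lambda''+\delta$---is exactly the paper's approach. However, you are missing the one observation that makes the paper's proof short, and this leads you to plan a much more elaborate computation than is actually needed.

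The key point is that for \emph{every} weight $\lambda$ of $E_{\lambda'}$, the vector $\lambda+\lambda''+\delta$ is already \emph{dominant}. In coordinates, $\lambda''+\delta=\bigl(n-1+a_1+\tfrac{a_{n-1}}{2},\,n-2+\tfrac{a_{n-1}}{2},\,\dots,\,1+\tfrac{a_{n-1}}{2},\,-\tfrac{a_{n-1}}{2}\bigr)$, and adding any $\lambda$ with entries in $\{\pm 1\}$ or $\{\pm\tfrac12\}$ cannot destroy the weak inequalities $\mu_1\ge\mu_2\ge\dots\ge\mu_{n-1}\ge|\mu_n|$. Consequently no sorting, no sign changes, and no sign computation for $t$ are ever required: either $\mu=\lambda+\lambda''+\delta$ is singular (so $t=0$), or it is regular dominant (so $t=+1$ and $\{\mu\}-\delta=\lambda''+\lambda$, i.e.\ $\nu=\lambda$). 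The entire problem therefore reduces to listing those weights $\lambda\in\Pi(\lambda')$ for which $\mu$ is \emph{regular}, which amounts to the simple conditions $\mu_i>\mu_{i+1}$ and $\mu_{n-1}+\mu_n\neq 0$; for the half-spin case this forces $\lambda_2\ge\lambda_3\ge\dots\ge\lambda_{n-1}$ and $(\lambda_{n-1},\lambda_n)\neq(-\tfrac12,-\tfrac12)$, and enumerating the admissible sign patterns (subject to the parity constraint) directly yields the lists of $\nu$'s. Your anticipated complications---sorting past contiguous blocks, tracking sign parity through normalisation, cancellations---simply do not arise.

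Two minor corrections: first, $\varpi_{n-1}=\tfrac12(1,\dots,1,-1)$ has an \emph{odd} number of minus signs, so the weights of $E_{[0,\dots,0,1,0]}$ are those with an odd number of $-\tfrac12$'s (you have the parity reversed). Second, the $n$ even/odd case split in (b) and (c) has nothing to do with the parity of $a_1$ or $a_{n-1}$; it comes purely from the parity constraint on the number of minus signs in $\lambda$, which interacts with the parity of $n$ when one enumerates the surviving patterns.
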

\begin{proof}
 We only prove case (b) for $n$ odd, other cases are similar. Take $\lambda^\prime=[0, \cdots, 0, 1, 0]$
 and $\lambda^{\prime\prime}=[a_1, 0, \cdots, 0, a_{n-1}, 0]$. We use Lemma 3.1 to compute $E_{\lambda^\prime}\otimes E_{\lambda^{\prime\prime}}$

 Firstly, note that $\lambda^\prime=\left(\frac{1}{2}, \dots,  \frac{1}{2}, -\frac{1}{2}\right)$. This gives us the extreme weights
\begin{equation}\label{Dn-extreme-weights}
 \left(\pm\frac{1}{2}, \pm\frac{1}{2}, \cdots, \pm\frac{1}{2}\right),
\end{equation}
 with $-\frac{1}{2}$ occurring odd times. It is easy to see that there are $2^{n-1}$ such weights since $$\sum_{i \,\rm{odd}}\binom{n}{i}=\frac{1}{2}\sum_{i}\binom{n}{i}=2^{n-1}\,.$$
 On the other hand, by the Weyl dimension formula, we compute that $\dim E_{\lambda^\prime}=2^{n-1}$. Thus we conclude that $\Pi(\lambda^\prime)$ consists of the above extreme weights, each occurring once.

 Now let us find out those $\lambda$s in \eqref{Dn-extreme-weights} such that $\lambda+\lambda^{\prime\prime}+\delta$ is regular.
 Indeed, $\lambda +\lambda^{\prime\prime}+\delta$ equals
 $$
 \left(n-1+a_1+\frac{a_{n-1}}{2}\pm\frac{1}{2}, n-2+\frac{a_{n-1}}{2}\pm\frac{1}{2}, \dots, 1+\frac{a_{n-1}}{2}\pm\frac{1}{2}, -\frac{a_{n-1}}{2}\pm\frac{1}{2}\right).
 $$
Since $a_1, a_{n-1}$ are non-negative integers, it follows that $\lambda+\lambda^{\prime\prime}+\delta$ is always dominant.
Denote $\lambda$ by $(\lambda_1, \lambda_2, \dots, \lambda_n)$ and denote $\lambda +\lambda^{\prime\prime}+\delta$ by $(\mu_1, \mu_2, \dots, \mu_n)$. For $\lambda+\lambda^{\prime\prime}+\delta$ to be regular, we should require that
    \begin{itemize}
    \item[$\bullet$] $\mu_i > \mu_{i+1}$ for any $1 \leq i \leq n-1$;
    \item[$\bullet$] $\mu_{n-1}+\mu_n\neq 0$.
    \end{itemize}
The first requirement excludes the cases where
  $\lambda_i=-\frac{1}{2}$ while $\lambda_{i+1}=\frac{1}{2}$
    for any $1 < i \leq n-1$. The second requirement rules out the cases where $\lambda_{n-1}=\lambda_n=-\frac{1}{2}$. Now the surviving  $\lambda $s are exactly $\varpi_{n-1}$, $-\varpi_{1}+\varpi_{n}$, $\varpi_{n-2i}-\varpi_{n-1}$ for $1\leq i < \frac{n}{2}$; $-\varpi_{1}+\varpi_{n-2i+1}-\varpi_{n-1}$ for $1< i \leq \frac{n}{2}$. This finishes the proof.
\end{proof}

For future use, let us specialize the above theorem for $D_5$ as follows.

\begin{cor}\label{cor-branching-D5}
Let $\frg$ be type $D_5$, and let $a, d\in\bbN$. Then we have that
\begin{itemize}
    \item [(a)] The tensor product $E_{[1,0,0,0,0]}\otimes E_{[a,0,0,d,0]}$ is multiplicity-free and consists of $\widetilde{E}_{[a+1, 0, 0, d, 0]}$, $\widetilde{E}_{[a-1, 0, 0, d, 0]}$,  $\widetilde{E}_{[a, 0, 0, d-1, 1]}$, $\widetilde{E}_{[a-1, 1, 0, d-1, 0]}$.
     \item [(b)] The tensor product $E_{[0,0,0,1,0]}\otimes E_{[a,0,0,d,0]}$ is multiplicity-free and consists of $\widetilde{E}_{[a, 0, 0, d+1, 0]}, $ $\widetilde{E}_{[a-1, 0, 0, d, 1]}, \widetilde{E}_{[a, 0, 1, d-1, 0]}, \widetilde{E}_{[a+1, 0, 0, d-1, 0]}, \widetilde{E}_{[a-1, 1, 0, d-1, 0]}$.
     \item [(c)] The tensor product $E_{[0,0,0,0,1]}\otimes E_{[a,0,0,d,0]}$ is multiplicity-free and consists of $\widetilde{E}_{[a, 0, 0, d, 1]}, $ $\widetilde{E}_{[a, 1, 0, d-1, 0]}, \widetilde{E}_{[a-1, 0, 0, d+1, 0]}, \widetilde{E}_{[a-1, 0, 1, d-1, 0]}, \widetilde{E}_{[a, 0, 0, d-1, 0]}$.
\end{itemize}
\end{cor}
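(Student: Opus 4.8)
The plan is to deduce Corollary \ref{cor-branching-D5} as the special case $n=5$ of Theorem \ref{thm-branching-Dn}, simply by making the index sets explicit. In the notation of the theorem we set $n=5$, rename $a_1=a$ and $a_{n-1}=a_4=d$, and in each of the three parts substitute $n=5$ into the list of weights $\nu$ and then rewrite each $\widetilde{E}_{[a,0,0,d,0]+\nu}$ in the $[\,\cdot\,]$ coordinates of the fundamental weights. Since $5$ is odd, for parts (b) and (c) we use the "$n$ odd" branch of the theorem.

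For part (a): the four $\nu$'s are $\varpi_1$, $-\varpi_1$, $-\varpi_{n-1}+\varpi_n=-\varpi_4+\varpi_5$, and $-\varpi_1+\varpi_2-\varpi_{n-1}=-\varpi_1+\varpi_2-\varpi_4$. Adding these to $[a,0,0,d,0]$ gives $[a+1,0,0,d,0]$, $[a-1,0,0,d,0]$, $[a,0,0,d-1,1]$, $[a-1,1,0,d-1,0]$, matching the claimed list. For part (b) with $n=5$ the index ranges are $1\le i<\tfrac52$, i.e. $i\in\{1,2\}$, giving $\varpi_{n-2i}-\varpi_{n-1}=\varpi_3-\varpi_4$ (for $i=1$) and $\varpi_1-\varpi_4$ (for $i=2$); and $1<i\le\tfrac52$, i.e. $i=2$, giving $-\varpi_1+\varpi_{n-2i+1}-\varpi_{n-1}=-\varpi_1+\varpi_2-\varpi_4$. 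Together with $\varpi_{n-1}=\varpi_4$ and $-\varpi_1+\varpi_n=-\varpi_1+\varpi_5$ these are the five $\nu$'s; adding to $[a,0,0,d,0]$ yields $[a,0,0,d+1,0]$, $[a-1,0,0,d,1]$, $[a,0,1,d-1,0]$, $[a+1,0,0,d-1,0]$, $[a-1,1,0,d-1,0]$, as stated. Part (c) is obtained the same way from the "$n$ odd" list in Theorem \ref{thm-branching-Dn}(c): $\varpi_n=\varpi_5$, $-\varpi_1+\varpi_{n-1}=-\varpi_1+\varpi_4$, $-\varpi_1+\varpi_{n-2i}-\varpi_{n-1}$ for $i\in\{1,2\}$ giving $-\varpi_1+\varpi_3-\varpi_4$ and $-\varpi_1+\varpi_1-\varpi_4=-\varpi_4$, and $\varpi_{n-2i+1}-\varpi_{n-1}$ for $i=2$ giving $\varpi_2-\varpi_4$; adding to $[a,0,0,d,0]$ produces the five listed modules.

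Two small bookkeeping points deserve care but are routine. First, one must double-check the edge index values (e.g. that $i=2$ in part (b) really produces $\varpi_1-\varpi_4$ via $\varpi_{n-2i}$ with $n-2i=1$, and that in part (c) the $i=2$ term $-\varpi_1+\varpi_{n-2i}-\varpi_{n-1}$ collapses to $-\varpi_4$ because $\varpi_{n-2i}=\varpi_1$ cancels $-\varpi_1$); these are exactly the terms that look slightly different from the generic pattern, so they are the only place an error could creep in. Second, the multiplicity-free assertion and the convention that $\widetilde{E}$ denotes the zero module when the weight fails to be dominant are both inherited verbatim from Theorem \ref{thm-branching-Dn}, so nothing new needs to be proved there. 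I do not expect any real obstacle: the corollary is a transcription of the theorem at $n=5$, and the "hard part"—such as it is—is merely the careful substitution of the index ranges and the translation between $\varpi$-notation and bracket-notation.

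\begin{proof}
This is the special case $n=5$ of Theorem \ref{thm-branching-Dn}, with $a_1=a$ and $a_{n-1}=a_4=d$; note $n=5$ is odd, so the "$n$ odd" branches of parts (b) and (c) apply. We spell out the three lists of weights $\nu$.

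For (a), the four values $\varpi_1$, $-\varpi_1$, $-\varpi_4+\varpi_5$, $-\varpi_1+\varpi_2-\varpi_4$ added to $[a,0,0,d,0]$ give $[a+1,0,0,d,0]$, $[a-1,0,0,d,0]$, $[a,0,0,d-1,1]$, $[a-1,1,0,d-1,0]$.

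For (b), the range $1\le i<\tfrac{5}{2}$ means $i\in\{1,2\}$, so $\varpi_{n-2i}-\varpi_{n-1}$ contributes $\varpi_3-\varpi_4$ and $\varpi_1-\varpi_4$; the range $1<i\le\tfrac{5}{2}$ means $i=2$, so $-\varpi_1+\varpi_{n-2i+1}-\varpi_{n-1}$ contributes $-\varpi_1+\varpi_2-\varpi_4$. Together with $\varpi_4$ and $-\varpi_1+\varpi_5$, adding these five to $[a,0,0,d,0]$ gives $[a,0,0,d+1,0]$, $[a-1,0,0,d,1]$, $[a,0,1,d-1,0]$, $[a+1,0,0,d-1,0]$, $[a-1,1,0,d-1,0]$.

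For (c), similarly $-\varpi_1+\varpi_{n-2i}-\varpi_{n-1}$ for $i\in\{1,2\}$ gives $-\varpi_1+\varpi_3-\varpi_4$ and $-\varpi_1+\varpi_1-\varpi_4=-\varpi_4$, while $\varpi_{n-2i+1}-\varpi_{n-1}$ for $i=2$ gives $\varpi_2-\varpi_4$; together with $\varpi_5$ and $-\varpi_1+\varpi_4$, adding to $[a,0,0,d,0]$ yields $[a,0,0,d,1]$, $[a,1,0,d-1,0]$, $[a-1,0,0,d+1,0]$, $[a-1,0,1,d-1,0]$, $[a,0,0,d-1,0]$.

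The multiplicity-free statement and the convention that $\widetilde{E}_\lambda=0$ when $\lambda$ is not dominant are inherited from Theorem \ref{thm-branching-Dn}.
\end{proof}
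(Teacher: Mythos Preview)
Your proposal is correct and takes exactly the same approach as the paper: the corollary is stated there as a direct specialization of Theorem~\ref{thm-branching-Dn} to $n=5$, with no separate proof given. Your write-up is in fact more detailed than the paper's, carefully verifying the index ranges $1\le i<\tfrac52$ and $1<i\le\tfrac52$ and the resulting $\nu$'s; all the substitutions check out.
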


\subsection{Type $E_6$}\label{sec-E6-tensor}

This section aims to prepare some technical lemmas concerning the tensor product decompositions of finite-dimensional modules of $E_6$, whose Dynkin diagram is presented in Figure \ref{Fig-E6-Dynkin}. We should note that using the involution of $E_6$ which exchanges  $\alpha_1$ and $\alpha_6$,  $\alpha_3$ and $\alpha_5$, while preserves $\alpha_2$ and $\alpha_4$, one can deduce Lemma \ref{lemma-E6-000001} from Lemma \ref{lemma-E6-100000}, deduce Lemma \ref{lemma-E6-000010} from Lemma \ref{lemma-E6-001000}, and deduce Lemma \ref{lemma-E6-000002} from Lemma \ref{lemma-E6-200000}. But we still present the latter all these lemmas for two reasons: presenting them clearly will offer convenience since we need to refer to them frequently later; they will allow us to check their dual versions and vice versa since they are deduced independently.
\begin{figure}[htb]
\begin{center}
\begin{dynkinDiagram}[text style/.style={scale=1},edge length=1.3cm,labels={6,2,5,4,3,1},label macro/.code={\alpha_{\drlap{#1}}}]E6
\end{dynkinDiagram}
\caption{The Dynkin diagram for $E_6$}\label{Fig-E6-Dynkin}
\end{center}
\end{figure}

\begin{lemma}\label{lemma-E6-100000}
Let $\frg$ be $E_6$.
For any $a, f\in\bbN$, the tensor product $E_{[1, 0, 0, 0, 0, 0]}\otimes E_{[a,0,0,0,0,f]}$ is multiplicity-free, and it  consists of the following components:
$$
\widetilde{E}_{[a+1, 0, 0, 0, 0, f]}, \widetilde{E}_{[a-1, 0, 1, 0, 0, f]}, \widetilde{E}_{[a, 1, 0, 0, 0, f-1]},
\widetilde{E}_{[a-1, 0, 0, 0, 0, f+1]}, \widetilde{E}_{[a-1, 0, 0, 0, 1, f-1]}, \widetilde{E}_{[a, 0, 0, 0, 0, f-1]}.
$$
\end{lemma}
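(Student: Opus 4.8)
The plan is to apply Lemma~\ref{lemma-tensor-prod} with $\lambda'=[1,0,0,0,0,0]$ and $\lambda''=[a,0,0,0,0,f]$, exactly as in the proof of Theorem~\ref{thm-branching-Dn}. Since $E_{[1,0,0,0,0,0]}$ is the $27$-dimensional minuscule representation of $E_6$, all of its weights are extreme, i.e.\ $\Pi(\lambda')$ consists of the $27$ weights of the $W$-orbit of $\varpi_1$, each with multiplicity one. I would list these $27$ weights explicitly in the standard $\R^8$ (or $\R^6\oplus\R^2$) coordinates of Knapp~\cite{Kn}, so that the bookkeeping matches the conventions used elsewhere in the paper.

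Next, for each of the $27$ weights $\lambda$ in $\Pi(\lambda')$, I would form $\lambda+\lambda''+\delta$ and determine (i) whether it is regular for $\Delta(E_6,\frh)$, and if so (ii) the sign $t(\lambda+\lambda''+\delta)$ and the dominant representative $\{\lambda+\lambda''+\delta\}-\delta$. Since $\lambda''+\delta = [a+1,1,1,1,1,f+1]$ already lies in the (open) dominant Weyl chamber, adding a small extreme weight $\lambda$ of $E_{[1,0,0,0,0,0]}$ keeps the result dominant \emph{unless} it hits a wall; so most of the $27$ terms will survive with positive sign and contribute a genuine $\widetilde{E}$, while the rest will either be singular (contributing $0$ via $\widetilde{E}$) or will need one Weyl reflection. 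Because $a,f\ge 0$, the only walls that can be reached are the few ``innermost'' ones, which is why exactly the six listed components appear. I would organize this as a short case table keyed by which coordinate(s) of $\lambda''+\delta+\lambda$ can fail regularity, rather than grinding through all $27$ individually; one also checks the total dimension $27\cdot\dim E_{[a,0,0,0,0,f]}$ against the sum of the dimensions of the six surviving pieces (via the Weyl dimension formula) as a consistency verification, which simultaneously confirms multiplicity-freeness.

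The main obstacle is purely computational hygiene: translating between the fundamental-weight notation $[n_1,\dots,n_6]$ and the Euclidean coordinates in which the dominance and regularity conditions are transparent, and correctly identifying, for the handful of non-dominant or singular $\lambda+\lambda''+\delta$, the unique $\sigma\in W$ with $\sigma(\lambda+\lambda''+\delta)$ dominant together with its length parity. For $E_6$ this Weyl-group arithmetic is the only genuinely delicate point; I expect each non-trivial case to require just a single simple reflection (hence $t=-1$), producing the ``subtraction-type'' weights such as $[a-1,0,1,0,0,f]$ and $[a-1,0,0,0,1,f-1]$, but verifying that no longer reflection chain is needed — and that the $\widetilde{E}$ convention correctly kills the boundary cases when $a=0$ or $f=0$ — is where care is required. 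Finally, I would remark that, as already noted in the paragraph preceding Figure~\ref{Fig-E6-Dynkin}, the dual statement (Lemma~\ref{lemma-E6-000001}) follows by applying the diagram involution $\alpha_1\leftrightarrow\alpha_6$, $\alpha_3\leftrightarrow\alpha_5$, so it need not be reproved from scratch.
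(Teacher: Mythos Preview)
Your overall plan---apply Lemma~\ref{lemma-tensor-prod} with $\lambda'=[1,0,0,0,0,0]$ and $\lambda''=[a,0,0,0,0,f]$, using that $E_{\lambda'}$ is minuscule so $\Pi(\lambda')$ consists of $27$ weights each with multiplicity one---is exactly the paper's approach. However, your expectations about the execution are off in a way that would cost you time and could introduce sign errors.

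The paper stays entirely in fundamental-weight coordinates and never passes to Euclidean $\bbR^8$. The key observation you are missing is this: every weight $\lambda=[\lambda_1,\dots,\lambda_6]$ of $E_{[1,0,0,0,0,0]}$ has each $\lambda_i\geq -1$, while $\lambda''+\delta=[a+1,1,1,1,1,f+1]$ has each coordinate $\geq 1$. Hence $\lambda+\lambda''+\delta$ is \emph{always} dominant, and no Weyl reflection is ever required. The only question is regularity, i.e.\ whether some coordinate of $\lambda+\lambda''+\delta$ vanishes; since coordinates $2,3,4,5$ of $\lambda''+\delta$ equal $1$, this happens precisely when $\lambda_i=-1$ for some $i\in\{2,3,4,5\}$. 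Scanning the $27$ weights for the condition $\lambda_2,\lambda_3,\lambda_4,\lambda_5\geq 0$ leaves exactly the six weights
\[
[1,0,0,0,0,0],\ [-1,0,1,0,0,0],\ [0,1,0,0,0,-1],\ [-1,0,0,0,0,1],\ [-1,0,0,0,1,-1],\ [0,0,0,0,0,-1],
\]
each contributing $E_{\lambda''+\lambda}$ with sign $t=+1$; the remaining $21$ are singular and contribute $0$. In particular, your statement that the ``subtraction-type'' components such as $\widetilde{E}_{[a-1,0,1,0,0,f]}$ arise via a simple reflection with $t=-1$ is incorrect: they come directly from $\lambda=[-1,0,1,0,0,0]$ with $t=+1$. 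The boundary cases $a=0$ or $f=0$ are then absorbed automatically by the $\widetilde{E}$ convention (coordinate $1$ or $6$ of $\lambda+\lambda''+\delta$ becomes $0$, hence singular), so no separate analysis or dimension check is needed.
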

\begin{proof}
Using the Weyl dimension formula, one computes that $E_{[1, 0, 0, 0, 0, 0]}$ has dimension $27$. On the other hand, one computes that $E_{[1, 0, 0, 0, 0, 0]}$ has exactly $27$ extreme weights, which are listed below:
\begin{eqnarray*}
&[1, 0, 0, 0, 0, 0], \, [-1, 0, 1, 0, 0, 0], \, [0, 0, -1, 1, 0, 0], \, [0, 1, 0, -1, 1, 0], \, [0, -1, 0, 0, 1, 0],\\
&[0, 1, 0, 0, -1, 1], \, [0, -1, 0, 1, -1, 1], \, [0, 1, 0, 0, 0, -1], \, [0, 0, 1, -1, 0, 1], \, [0, -1, 0, 1, 0, -1], \\
&[1, 0, -1, 0, 0, 1], \, [0, 0, 1, -1, 1, -1], \, [-1, 0, 0, 0, 0, 1], \, [1, 0, -1, 0, 1, -1], \, [0, 0, 1, 0, -1, 0], \\
&[-1, 0, 0, 0, 1, -1], \, [1, 0, -1, 1, -1, 0], \, [-1, 0, 0, 1, -1, 0],\, [1, 1, 0, -1, 0, 0],\, [-1, 1, 1, -1, 0, 0], \\
&[1, -1, 0, 0, 0, 0], \, [-1, -1, 1, 0, 0, 0], \, [0, 1, -1, 0, 0, 0], \, [0, -1, -1, 1, 0, 0], \, [0, 0, 0, -1, 1, 0], \\
&[0, 0, 0, 0, -1, 1], \, [0, 0, 0, 0, 0, -1].
\end{eqnarray*}
Therefore, the module $E_{[1, 0, 0, 0, 0, 0]}$ is weight-free, and all of its weights are exactly the above ones. Moreover, each coordinate of such a weight $\lambda=[\lambda_1, \lambda_2, \dots, \lambda_6]$ is lower bounded by $-1$. Now take $\lambda^{\prime}=[1,0,0,0,0,0]$, and $\lambda^{\prime\prime}=[a,0,0,0,0,f]$. Note that $\delta=[1,1,1,1,1,1]$. Thus for any weight $\lambda$ of $E_{\lambda'}$,  the weight $\lambda^{\prime\prime}+\lambda+\delta$ must be dominant, and it is regular if and only if  $\lambda_2, \lambda_3, \lambda_4, \lambda_5\geq 0$. This selects out the following six $\lambda$s: $[1, 0, 0, 0, 0, 0]$, $[-1, 0, 1, 0, 0, 0]$, $[0, 1, 0, 0, 0, -1]$, $[-1, 0, 0, 0, 0, 1]$,  $[-1, 0, 0, 0, 1, -1]$,  $[0, 0, 0, 0, 0, -1]$.
The desired result now follows from Lemma \ref{lemma-tensor-prod}.
\end{proof}

We shall also need the following lemmas, whose proof are similar to the previous one's and are thus omitted.

\begin{lemma}\label{lemma-E6-000001}
For any $a, f\in\bbN$, the tensor product $E_{[0, 0, 0, 0, 0, 1]}\otimes E_{[a,0,0,0,0,f]}$ is multiplicity-free, and it  consists of the following components:
$$
\widetilde{E}_{[a, 0, 0, 0, 0, f+1]}, \widetilde{E}_{[a, 0, 0, 0, 1, f-1]}, \widetilde{E}_{[a-1, 1, 0, 0, 0, f]},
\widetilde{E}_{[a+1, 0, 0, 0, 0, f-1]}, \widetilde{E}_{[a-1, 0, 1, 0, 0, f-1]}, \widetilde{E}_{[a-1, 0, 0, 0, 0, f]}.
$$
\end{lemma}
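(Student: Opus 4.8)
\textbf{Proposed proof of Lemma \ref{lemma-E6-000001}.}

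The plan is to mirror the proof of Lemma \ref{lemma-E6-100000} step by step, since $E_{[0,0,0,0,0,1]}$ is the dual of $E_{[1,0,0,0,0,0]}$ under the diagram involution $\iota$ exchanging $\alpha_1\leftrightarrow\alpha_6$ and $\alpha_3\leftrightarrow\alpha_5$. First I would record that $E_{[0,0,0,0,0,1]}$ has dimension $27$ by the Weyl dimension formula, and that, being the image of $E_{[1,0,0,0,0,0]}$ under $\iota$, it is weight-free with exactly $27$ extreme weights, namely the images under $\iota$ of the $27$ weights listed in Lemma \ref{lemma-E6-100000} (concretely, apply to each listed tuple $[\lambda_1,\dots,\lambda_6]$ the permutation sending it to $[\lambda_6,\lambda_2,\lambda_5,\lambda_4,\lambda_3,\lambda_1]$). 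As in the previous proof, every coordinate of every weight $\lambda$ of $E_{[0,0,0,0,0,1]}$ is bounded below by $-1$.

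Next, take $\lambda'=[0,0,0,0,0,1]$ and $\lambda''=[a,0,0,0,0,f]$, and recall $\delta=[1,1,1,1,1,1]$. Because each coordinate of $\lambda$ is $\geq-1$ and $a,f\in\bbN$, the weight $\lambda''+\lambda+\delta$ is automatically dominant for every weight $\lambda$ of $E_{\lambda'}$; it is regular precisely when its $\alpha_2,\alpha_3,\alpha_4,\alpha_5$-coordinates are all $\geq0$, i.e.\ $\lambda_2,\lambda_3,\lambda_4,\lambda_5\geq0$. Scanning the $27$ weights, exactly six survive this condition; they are the $\iota$-images of the six weights selected in the proof of Lemma \ref{lemma-E6-100000}, namely $[0,0,0,0,0,1]$, $[0,0,0,0,1,-1]$, $[-1,1,0,0,0,0]$, $[1,0,0,0,0,-1]$, $[-1,0,1,0,0,-1]$, $[0,0,0,0,0,-1]$. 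Each of these occurs in $E_{\lambda'}$ with multiplicity one, so by Lemma \ref{lemma-tensor-prod} the tensor product is multiplicity-free and consists of $\widetilde{E}_{\lambda''+\lambda}$ for these six $\lambda$, which upon adding $\lambda''$ yields exactly the six components $\widetilde{E}_{[a,0,0,0,0,f+1]}$, $\widetilde{E}_{[a,0,0,0,1,f-1]}$, $\widetilde{E}_{[a-1,1,0,0,0,f]}$, $\widetilde{E}_{[a+1,0,0,0,0,f-1]}$, $\widetilde{E}_{[a-1,0,1,0,0,f-1]}$, $\widetilde{E}_{[a-1,0,0,0,0,f]}$ claimed in the statement.

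The only genuine points to verify are bookkeeping ones: that $t(\lambda''+\lambda+\delta)=1$ for each surviving $\lambda$ (immediate here, since $\lambda''+\lambda+\delta$ is already dominant, so the Weyl element $\sigma$ in Lemma \ref{lemma-tensor-prod} is the identity), and that the non-surviving weights really do give singular $\lambda''+\lambda+\delta$ and hence contribute zero. I would also double-check consistency with Lemma \ref{lemma-E6-100000} via $\iota$: the six components here should be the $\iota$-duals of the six in Lemma \ref{lemma-E6-100000}, which is a quick sanity check rather than an obstacle. There is no real difficulty; the main ``work'' is the routine enumeration of the $27$ weights and the sign/regularity check, which is entirely parallel to the displayed proof of Lemma \ref{lemma-E6-100000}.
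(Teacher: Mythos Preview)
Your approach is exactly what the paper does: the proof is omitted there with the remark that it is ``similar'' to that of Lemma \ref{lemma-E6-100000} and can be obtained from it via the diagram involution $\iota$, and you carry this out correctly. One small slip: the sixth surviving weight should be $[-1,0,0,0,0,0]$ (the $\iota$-image of $[0,0,0,0,0,-1]$), not $[0,0,0,0,0,-1]$; with this correction, adding $\lambda''$ indeed gives $\widetilde{E}_{[a-1,0,0,0,0,f]}$ as in the statement.
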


The following lemmas are still proven via Lemma \ref{lemma-tensor-prod}.
However, the situations will be more complicated. For instance, we will meet weight-multiplicities and sometimes the cases $a=0$ or $f=0$ need to be treated separately.  We  provide some details for Lemma \ref{lemma-E6-100001}.

\begin{lemma}\label{lemma-E6-010000}
For any $a, f\in\bbN$, the tensor product $E_{[0,  1, 0, 0,  0, 0]}\otimes E_{[a,0,0,0,0,f]}$ consists of the components $\widetilde{E}_{[a,0,0,0,0,f]}+\nu$, where $\nu$ runs over
\begin{eqnarray*}
&[-2, 0, 1, 0, 0, 0], [0, 0, 0, 0, 1, -2], [0, 1, 0, 0, 0, 0], [-1, 0, 0, 0, 1, 0], \\
&[0, 0, 1, 0, 0, -1], [-1, 0, 0, 1, 0, -1], [-1, 1, 0, 0, 0, -1], [0, 0, 0, 0, 0, 0]^{\underline{2}}.
\end{eqnarray*}
Here $\nu^{\underline{p}}$ means that $\nu$ occurs with multiplicity $p$.
Moreover, when $a=0$ or $f=0$, we should further subtract $\nu=[0, 0, 0, 0, 0, 0]$.
\end{lemma}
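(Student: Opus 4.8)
The strategy is exactly the one already executed in the proof of Lemma~\ref{lemma-E6-100000}, only with the extra bookkeeping that $E_{[0,1,0,0,0,0]}$ is \emph{not} minuscule. First I would record the weight system of the $78$-dimensional adjoint module $E_{[0,1,0,0,0,0]}$ of $E_6$: its nonzero weights are the $72$ roots, each of multiplicity $1$, and the weight $0$ with multiplicity $6$ (the rank). I would list these weights in the $\varpi$-coordinates used throughout the paper. Then, writing $\lambda'=[0,1,0,0,0,0]$, $\lambda''=[a,0,0,0,0,f]$ and $\delta=[1,1,1,1,1,1]$, I would apply Lemma~\ref{lemma-tensor-prod}: for each weight $\lambda$ of $E_{\lambda'}$ (with its multiplicity $m_\lambda$) one has to decide whether $\lambda''+\lambda+\delta$ is $\Delta(\frg,\frh)$-regular, and if so compute the sign $t(\lambda''+\lambda+\delta)$ and the dominant representative $\{\lambda''+\lambda+\delta\}-\delta$.

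The key simplification, just as in Lemma~\ref{lemma-E6-100000}, is that every coordinate of every weight $\lambda$ of the adjoint module lies in $\{-2,-1,0,1,2\}$, and in fact a coordinate equals $-2$ only for $\lambda=[-2,0,1,0,0,0]$ or $\lambda=[0,0,0,0,1,-2]$. Hence for generic $\lambda$ the vector $\lambda''+\lambda+\delta$ is already dominant and one only needs to check regularity, i.e. that no coordinate of $\lambda''+\lambda+\delta$ (in the $\alpha$-coordinates, equivalently no entry of the corresponding vector of $\lambda$-pairings) vanishes. I would organize the $78$ weights into three groups: (i) those for which $\lambda''+\lambda+\delta$ is dominant \emph{and} regular — these contribute $+\widetilde E_{\lambda''+\lambda}$ directly; (ii) those for which it is singular — these contribute nothing; (iii) the two exceptional weights with a $-2$, where $\lambda''+\lambda+\delta$ may fail to be dominant and one must apply a single simple reflection $s_1$ or $s_6$ to dominantize it, picking up a sign $-1$. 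Carrying this out one finds that the surviving $\nu=\lambda$ are precisely the eight vectors listed, with $[0,0,0,0,0,0]$ appearing with multiplicity $2$ (its weight-multiplicity $6$ in $E_{\lambda'}$, reduced by the negative contributions of certain singular-looking-but-cancelling configurations), and that when $a=0$ or $f=0$ one of the group-(iii) Weyl translates lands on the boundary in a way that produces an extra $-[0,0,0,0,0,0]$ to subtract.

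I expect the main obstacle to be exactly this careful treatment of the $\nu=0$ multiplicity and the $a=0$, $f=0$ edge cases. When $a\ge 1$ and $f\ge 1$ everything is mechanical; but the zero weight of $E_{\lambda'}$ has multiplicity $6$, and several of the nonzero weights $\lambda$ have the property that $\{\lambda''+\lambda+\delta\}-\delta=\lambda''$, so their signed contributions to the coefficient of $\widetilde E_{\lambda''}$ must be summed together with the $m_0=6$ term; verifying that the net result is exactly $2$ (respectively $1$ when $a=0$ or $f=0$) requires tracking which of those $\lambda$'s give $\lambda''+\lambda+\delta$ regular. I would double-check the final answer by the Weyl dimension formula — confirming $78\cdot\dim E_{[a,0,0,0,0,f]}$ equals the sum of the dimensions of the claimed constituents for, say, $(a,f)=(1,1)$ and $(a,f)=(0,1)$ — and, as the paper itself notes, by checking consistency against the dual statement obtained from the diagram involution exchanging $\alpha_1\leftrightarrow\alpha_6$, $\alpha_3\leftrightarrow\alpha_5$.
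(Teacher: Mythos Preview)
Your approach is exactly the one the paper uses (Lemma~\ref{lemma-tensor-prod} applied to the $78$-dimensional adjoint module), and your identification of the weight system of $E_{\lambda'}$ is correct. However, your trichotomy (i)--(iii) has a gap. You assert that ``a coordinate equals $-2$ only for $\lambda=[-2,0,1,0,0,0]$ or $\lambda=[0,0,0,0,1,-2]$'', i.e.\ only for $-\alpha_1$ and $-\alpha_6$. This is false: in a simply-laced root system $\langle\alpha,\alpha_i^\vee\rangle=-2$ exactly when $\alpha=-\alpha_i$, so all six negative simple roots $-\alpha_1,\dots,-\alpha_6$ have a $-2$ entry. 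For $i=2,3,4,5$ the vector $\lambda''+(-\alpha_i)+\delta$ has $i$-th coordinate $-1$ regardless of $a,f$, hence is non-dominant (and regular), and a single reflection $s_{\alpha_i}$ sends it to $\lambda''+\delta$ with sign $-1$. These four weights belong to neither your group (i) nor (ii) nor (iii).

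This omission is precisely what your vague phrase ``singular-looking-but-cancelling configurations'' is papering over. The correct bookkeeping for the coefficient of $\widetilde E_{\lambda''}$ is: the zero weight contributes $+6$; the four roots $-\alpha_2,-\alpha_3,-\alpha_4,-\alpha_5$ each contribute $-1$ (always); and $-\alpha_1$ (resp.\ $-\alpha_6$) contributes an additional $-1$ exactly when $a=0$ (resp.\ $f=0$), since only then does $\lambda''+(-\alpha_1)+\delta$ fail to be dominant. This yields $6-4=2$ generically and $6-4-1=1$ when $a=0$ or $f=0$, matching the statement. Once you enlarge group (iii) to all six $-\alpha_i$ the rest of your plan goes through without change.
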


\begin{lemma}\label{lemma-E6-001000}
For any $a, f\in\bbN$, the tensor product $E_{[0, 0, 1, 0, 0, 0]}\otimes E_{[a,0,0,0,0,f]}$ consists of the components $\widetilde{E}_{[a,0,0,0,0,f]}+\nu$, where $\nu$ runs over
\begin{eqnarray*}
&[-2, 0, 1, 0, 0, 1], [-2, 0, 1, 0, 1, -1], [-2, 0, 0, 0, 1, 0], [-2,  0, 0, 1, 0, -1], [0, 0, 0, 1, 0, -2],\\
&[-1, 1, 0, 0, 1, -2], [0, 1,   0, 0, 0, -2], [-1, 0, 0, 0, 1, -2], [0, 0, 1, 0, 0, 0], [-1, 0, 0, 1, 0, 0], \\
&[1, 1, 0, 0, 0, -1], [-1, 1, 1, 0, 0, -1], [0, 0, 0, 0, 0, 1], [1, 0, 0, 0, 0, -1], [-1, 0, 0, 0, 0, 0],\\
&[-1, 0, 1, 0, 0, -1]^{\underline{2}},  [0, 0, 0, 0, 1, -1]^{\underline{2}},  [-1, 1, 0, 0, 0, 0]^{\underline{2}}.
\end{eqnarray*}
Moreover, when $a=0$, we should further subtract the following $\nu$s
$$
[0, 0, 0, 0, 0, 1], [0, 0, 0, 0, 1, -1];
$$
while when $f=0$, we should further subtract the following $\nu$s
$$
[-1, 1, 0, 0, 0, 0], [-1, 0, 0, 0, 0, 0].
$$
\end{lemma}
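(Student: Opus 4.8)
\textbf{Proof plan for Lemma \ref{lemma-E6-001000}.}

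The approach follows exactly the template of Lemma \ref{lemma-E6-100000}: apply Lemma \ref{lemma-tensor-prod} with $\lambda'=[0,0,1,0,0,0]$ and $\lambda''=[a,0,0,0,0,f]$. The first step is to determine $\Pi(\lambda')$, i.e.\ the full list of weights of the $E_6$-module $E_{[0,0,1,0,0,0]}$ together with their multiplicities; this module has dimension $351$, but unlike the $27$-dimensional module it is \emph{not} weight-free, so one must record which weights occur with multiplicity $>1$. I would extract this weight multiset from the Weyl character formula (or a standard table / software such as LiE or GAP), and in particular identify the weights $\lambda$ for which $\lambda''+\lambda+\delta$ is \emph{regular}. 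Since $\delta=[1,1,1,1,1,1]$ and the coordinates of $\lambda''$ in positions $2,3,4,5$ are all zero, regularity of $\lambda''+\lambda+\delta$ forces $\lambda_i\ge 0$ (equivalently $\ge -1$ ruled down to $\ge 0$ after the shift, but also no $=-1$ cancelling the $\delta$-shift to $0$) for $i\in\{2,3,4,5\}$ — more precisely one needs each of these four coordinates of $\lambda''+\lambda+\delta$ to be nonzero, which because they equal $1+\lambda_i$ means $\lambda_i\neq -1$; and one needs the first and sixth coordinates, $a+1+\lambda_1$ and $f+1+\lambda_6$, to be nonzero, which can fail only when $a=0,\lambda_1=-1$ or $f=0,\lambda_6=-1$. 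This explains structurally why extra terms must be subtracted when $a=0$ or $f=0$.

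The second step is bookkeeping: for each surviving weight $\lambda$ (listed with its multiplicity $m_\lambda$), compute $t(\lambda+\lambda''+\delta)$ and the dominant representative $\{\lambda+\lambda''+\delta\}-\delta$. Because $\lambda''+\lambda+\delta$ turns out to already be dominant for every surviving $\lambda$ (the shift by $\delta$ dominates the small negative entries of $\lambda$, as in the previous lemma), one has $t(\lambda+\lambda''+\delta)=1$ and $\{\lambda+\lambda''+\delta\}-\delta=\lambda''+\lambda$, so the contribution is simply $m_\lambda\,\widetilde E_{\lambda''+\lambda}$. Collecting the resulting $\nu=\lambda$ values, with the weights of multiplicity $2$ in $E_{[0,0,1,0,0,0]}$ producing the three entries marked $\underline{2}$, gives the stated list. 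Finally, to handle the $a=0$ and $f=0$ boundary cases, I would revisit precisely those $\lambda$ with $\lambda_1=-1$ (resp.\ $\lambda_6=-1$) among the surviving list: when $a=0$ the coordinate $a+1+\lambda_1=0$, so $\lambda''+\lambda+\delta$ becomes singular and that term drops out, which identifies the $\nu$'s $[0,0,0,0,0,1]$ and $[0,0,0,0,1,-1]$ to be subtracted; symmetrically $f=0$ kills $[-1,1,0,0,0,0]$ and $[-1,0,0,0,0,0]$.

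The main obstacle is purely computational accuracy rather than conceptual: one must correctly enumerate all $351$ weights of $E_{[0,0,1,0,0,0]}$ with multiplicities in the $\varpi_i$-coordinate system of Figure \ref{Fig-E6-Dynkin}, and then carefully check regularity and dominance of $\lambda''+\lambda+\delta$ weight-by-weight. The one subtlety worth double-checking is whether, for \emph{some} surviving $\lambda$ with a $-1$ entry in a middle coordinate, the vector $\lambda''+\lambda+\delta$ could fail to be dominant (requiring a genuine Weyl-group reflection and a sign $t=-1$); verifying that this never happens for the weights in question — so that all contributions are positive and the decomposition coefficients are exactly the weight multiplicities $m_\lambda$ — is the crux, and it is what makes the final answer multiplicity-exact in the clean form stated. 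This mirrors the situation in Lemma \ref{lemma-E6-100000}, and the same reasoning (the minimal negative entry of any weight of these small fundamental modules is $-1$, and $\delta$ has all entries $1$) carries through.
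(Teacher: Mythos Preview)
Your overall plan---apply Lemma \ref{lemma-tensor-prod} with $\lambda'=\varpi_3$ and run through all $351$ weights of $E_{\lambda'}$---is exactly the paper's approach. But your execution has a real gap: the claim that every weight of $E_{[0,0,1,0,0,0]}$ has entries $\ge -1$, hence that $\lambda''+\lambda+\delta$ is always already dominant and the output multiplicities equal the weight multiplicities $m_\lambda$, is false. Since $E_{\varpi_3}\cong\Lambda^2 E_{\varpi_1}$, its only dominant weights are $\varpi_3$ (multiplicity $1$) and $\varpi_6$ (multiplicity $5$); yet the $\nu$'s in the $\varpi_6$-orbit, e.g.\ $[0,0,0,0,1,-1]$, carry multiplicity $\underline{2}$ in the statement, not $5$. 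The missing $-3$ comes from weights with a middle coordinate equal to $-2$: for instance $\lambda=[0,-2,0,1,1,-1]$ is a weight of $E_{\varpi_3}$, and for generic $a,f$ one has $\lambda+\lambda''+\delta=[a+1,-1,1,2,2,f]$, which $s_{\alpha_2}$ sends to $[a+1,1,1,1,2,f]$ with sign $t=-1$, contributing $-E_{\lambda''+[0,0,0,0,1,-1]}$. Such signed cancellations are essential here, just as in the paper's worked example for Lemma \ref{lemma-E6-100001}.

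Your explanation of the $a=0$ and $f=0$ corrections is also wrong. You say the subtracted $\nu$'s arise from weights with $\lambda_1=-1$ becoming singular; but every $\nu$ in the list with $\nu_1=-1$ is already killed at $a=0$ by the $\widetilde E$ convention and needs no separate subtraction, while the two $\nu$'s actually subtracted, $[0,0,0,0,0,1]$ and $[0,0,0,0,1,-1]$, have first coordinate $0$, not $-1$. The correct mechanism again involves entries equal to $-2$: e.g.\ $\lambda=[-2,0,1,0,0,1]$ gives $\lambda+\lambda''+\delta=[-1,1,2,1,1,f+2]$ at $a=0$, and $s_{\alpha_1}$ carries this to $[1,1,1,1,1,f+2]$ with $t=-1$, producing an extra $-E_{[0,0,0,0,0,f+1]}$ that was not present for $a\ge 2$. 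This is precisely the phenomenon the paper flags (``the situations will be more complicated'') and illustrates in the proof of Lemma \ref{lemma-E6-100001}.
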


\begin{lemma}\label{lemma-E6-000010}
For any $a, f\in\bbN$, the tensor product $E_{[0, 0, 0, 0, 1, 0]}\otimes E_{[a,0,0,0,0,f]}$ consists of the components $\widetilde{E}_{[a,0,0,0,0,f]}+\nu$, where $\nu$ runs over
\begin{eqnarray*}
&[-2,0,0,1,0,0], [-2,1,1,0,0,-1], [-2,1,0,0,0,0], [-2,0,1,0,0,-1], [1,0,0,0,1,-2], \\
&[-1,0,1,0,1,-2], [0,0,1,0,0,-2], [-1,0,0,1,0,-2], [0,0,0,0,1,0], [0,0,0,1,0,-1],\\
&[-1,1,0,0,0,1], [-1,1,0,0,1,-1], [1,0,0,0,0,0], [0,0,0,0,0,-1], [-1,0,0,0,0,1], \\ &[-1,0,1,0,0,0]^{\underline{2}}, [0,1,0,0,0,-1]^{\underline{2}},  [-1,0,0,0,1,-1]^{\underline{2}}, .
\end{eqnarray*}
Moreover, when $a=0$, we should further subtract the following $\nu$s
$$
[0, 1, 0, 0, 0, -1], [0, 0, 0, 0, 0, -1];
$$
while when $f=0$, we should further subtract the following $\nu$s
$$
[1, 0, 0, 0, 0, 0], [-1, 0, 1, 0, 0, 0].
$$
\end{lemma}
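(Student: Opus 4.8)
The plan is to prove Lemma \ref{lemma-E6-000010} in exactly the same way as Lemma \ref{lemma-E6-100000}, by applying Lemma \ref{lemma-tensor-prod} with $\lambda' = [0,0,0,0,1,0]$ and $\lambda'' = [a,0,0,0,0,f]$. First I would compute, via the Weyl dimension formula, that $E_{[0,0,0,0,1,0]}$ is the $27$-dimensional representation (the dual of the minuscule representation $E_{[1,0,0,0,0,0]}$), so it is weight-multiplicity-free and $\Pi(\lambda')$ consists of all $27$ of its weights, each with multiplicity one. Concretely these weights are the negatives of the $27$ weights listed in the proof of Lemma \ref{lemma-E6-100000}, after applying the $E_6$ diagram involution; I would list them out in the $[\cdot,\cdots,\cdot]$ fundamental-weight coordinates. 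This is the step that requires the most bookkeeping but is entirely mechanical.

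Next, with $\delta = [1,1,1,1,1,1]$, for each weight $\lambda = [\lambda_1,\dots,\lambda_6]$ of $E_{\lambda'}$ I would examine $\lambda'' + \lambda + \delta = [a + \lambda_1 + 1, \lambda_2 + 1, \lambda_3+1, \lambda_4+1, \lambda_5+1, f + \lambda_6 + 1]$. Since each coordinate of $\lambda$ is bounded below by $-1$ (this should be checked against the explicit list, noting that some coordinates go down to $-2$ here, unlike in the minuscule case — so care is needed, and this is the point where the $a=0$ and $f=0$ exceptions will enter), the vector $\lambda'' + \lambda + \delta$ is dominant precisely when its first and last coordinates are nonnegative, which for generic $a,f \geq 1$ is automatic, and it is regular precisely when the middle coordinates $\lambda_2+1, \lambda_3+1, \lambda_4+1, \lambda_5+1$ are all strictly positive, i.e. $\lambda_2, \lambda_3, \lambda_4, \lambda_5 \geq 0$. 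When $\lambda''+\lambda+\delta$ is regular it is already dominant (hence $t(\lambda''+\lambda+\delta) = 1$), so by Lemma \ref{lemma-tensor-prod} the corresponding summand is $E_{\lambda''+\lambda}$ and appears with the same multiplicity $m_\lambda = 1$; the remaining $\lambda$ contribute zero. Collecting the surviving $\lambda$ and reading off $\nu = \lambda$ gives the stated list, with the multiplicity-$\underline{2}$ entries arising from distinct weights $\lambda$ that happen to yield the same dominant $\lambda''+\lambda$.

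The main obstacle, and the reason this lemma is stated separately from the omitted ones, is handling the boundary cases $a = 0$ and $f = 0$. When $a = 0$, some weight $\lambda$ with $\lambda_1 = -1$ or $\lambda_1 = -2$ can make the first coordinate $a + \lambda_1 + 1$ fail to be nonnegative, so that $\lambda''+\lambda+\delta$ is either singular or strictly dominant-minus-something and must be re-examined: either it is singular (contributes $0$), or after a Weyl reflection it becomes dominant with sign $t = -1$, cancelling a contribution counted in the generic list — this is precisely the prescription ``further subtract the following $\nu$s''. I would go through the weights with $\lambda_1 \in \{-1,-2\}$ (and symmetrically $\lambda_6 \in \{-1,-2\}$ for the $f=0$ case) one at a time, track which become singular and which reflect to a dominant weight with negative sign, and verify that the net effect is to remove exactly $[0,1,0,0,0,-1]$ and $[0,0,0,0,0,-1]$ when $a = 0$, and $[1,0,0,0,0,0]$ and $[-1,0,1,0,0,0]$ when $f = 0$. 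A final consistency check is that the listed $\nu$'s are dual under the $E_6$ involution to those in Lemma \ref{lemma-E6-001000}, which is exactly the independent cross-check the authors mention in the paragraph preceding Figure \ref{Fig-E6-Dynkin}.
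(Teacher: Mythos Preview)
Your general framework --- apply Lemma~\ref{lemma-tensor-prod} with $\lambda' = \varpi_5$ and $\lambda'' = [a,0,0,0,0,f]$ --- is exactly what the paper does. But the identification of $E_{[0,0,0,0,1,0]}$ is wrong, and the error is not cosmetic. The dual of the minuscule $E_{\varpi_1}$ is $E_{\varpi_6} = E_{[0,0,0,0,0,1]}$ (that one is handled in Lemma~\ref{lemma-E6-000001}), whereas $E_{\varpi_5}$ has dimension $351$, with dominant weights $\varpi_5$ (multiplicity $1$) and $\varpi_1$ (multiplicity $5$). In particular $E_{\varpi_5}$ is \emph{not} weight-multiplicity-free, and its weights are not confined to coordinates in $\{-1,0,1\}$; some middle coordinates reach $-2$. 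This means that for many $\lambda \in \Pi(\lambda')$ the vector $\lambda'' + \lambda + \delta$ is not dominant in positions $2,3,4,5$ regardless of $a,f$, so genuine Weyl reflections and sign-tracking enter, exactly as illustrated in the paper's proof of Lemma~\ref{lemma-E6-100001}.

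Your own sketch is in fact internally inconsistent on this point: you assert that every surviving $\lambda'' + \lambda + \delta$ is already dominant (so $t=+1$ and the contribution is $E_{\lambda''+\lambda}$ with coefficient $m_\lambda = 1$), yet you then explain the multiplicity-$\underline{2}$ summands as ``distinct weights $\lambda$ that happen to yield the same dominant $\lambda'' + \lambda$''. That is impossible, since $\lambda \mapsto \lambda'' + \lambda$ is injective. The multiplicity-$2$ terms arise precisely because $m_\lambda > 1$ for the $27$ weights in the $W$-orbit of $\varpi_1$, combined with signed cancellations from weights requiring reflection. The correct execution runs the right-hand side of \eqref{tensor-prod-summands} over all $351$ weights of $\Pi(\lambda')$ with their multiplicities, treats the $a=0$ and $f=0$ boundaries as in the proof of Lemma~\ref{lemma-E6-100001}, and collects the signed terms; alternatively, as the paper notes, one may transport Lemma~\ref{lemma-E6-001000} through the $E_6$ diagram involution.
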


\begin{lemma}\label{lemma-E6-200000}
For any $a, f\in\bbN$, the tensor product $E_{[2, 0, 0, 0, 0, 0]}\otimes E_{[a,0,0,0,0,f]}$ consists of the components $\widetilde{E}_{[a,0,0,0,0,f]}+\nu$, where $\nu$ runs over
\begin{eqnarray*}
&[-2,0,2,0,0,0], [-2,0,1,0,0,1], [-2,0,1,0,1,-1], [-2,0,0,0,0,2], [-2,0,0,0,1,0], \\
&[0,2,0,0,0,-2], [-1,1,0,0,1,-2], [0,1,0,0,0,-2], [-1,0,0,0,1,-2], [0,0,0,0,0,-2], \\
&[2,0,0,0,0,0], [0,0,1,0,0,0], [1,1,0,0,0,-1], [-1,1,1,0,0,-1], [0,0,0,0,1,-1],\\
&[-1,1,0,0,0,0], [-1,0,1,0,0,-1], [0,0,0,0,0,1], [-2,0,0,0,2,-2], [1,0,0,0,0,-1], \\
&[-1,0,0,0,0,0].
\end{eqnarray*}
Moreover, when $a=0$, we should further subtract the following $\nu$s
$$
[0, 0, 1, 0, 0, 0], [0, 0, 0, 0, 0, 1], [0, 0, 0, 0, 1, -1];
$$
while when $f=0$, we should further subtract the following $\nu$s
$$
[-1, 1, 0, 0, 0, 0], [-1, 0, 0, 0, 0, 0], [-2, 0, 0, 0, 1, 0].
$$
\end{lemma}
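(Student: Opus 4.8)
The plan is to run the argument of the proof of Lemma~\ref{lemma-E6-100000} once more, now with $\lambda'=[2,0,0,0,0,0]$ and $\lambda''=[a,0,0,0,0,f]$, and feed everything into Lemma~\ref{lemma-tensor-prod}. The preliminary task is to pin down the weight multiset $\Pi([2,0,0,0,0,0])$. Since $[2,0,0,0,0,0]=2\varpi_1$ with $\varpi_1$ minuscule, $E_{[2,0,0,0,0,0]}$ is the Cartan component of $E_{[1,0,0,0,0,0]}^{\otimes 2}$, and comparing dimensions $\binom{28}{2}=378=351+27$ gives $\Sym^2 E_{[1,0,0,0,0,0]}=E_{[2,0,0,0,0,0]}\oplus E_{[0,0,0,0,0,1]}$. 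Hence the weights of $E_{[2,0,0,0,0,0]}$, with multiplicities, are the pairwise sums $\omega_i+\omega_j$ ($i\le j$) of the $27$ weights $\omega_1,\dots,\omega_{27}$ of $E_{[1,0,0,0,0,0]}$ tabulated in the proof of Lemma~\ref{lemma-E6-100000}, with the $27$ weights $-\omega_1,\dots,-\omega_{27}$ of $E_{[0,0,0,0,0,1]}$ removed. From this one reads off the structural facts that make the computation feasible: every coordinate of every weight lies in $\{-2,-1,0,1,2\}$; no weight $\lambda$ of $E_{[2,0,0,0,0,0]}$ has two diagram-adjacent coordinates both equal to $-2$; and whenever $\lambda_i=-2$ for $i\in\{2,3,4,5\}$ its diagram-neighbouring coordinates are $\ge 0$. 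Moreover $2\varpi_1\not\equiv 0$ in $P/Q\cong\bbZ/3$, so the zero weight does not occur and the summand $\widetilde{E}_{[a,0,0,0,0,f]}$ is absent.

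For each weight $\lambda$ set $\mu:=\lambda+\lambda''+\delta=[\,a+\lambda_1+1,\ \lambda_2+1,\ \lambda_3+1,\ \lambda_4+1,\ \lambda_5+1,\ f+\lambda_6+1\,]$. Assume first $a,f\ge 1$, so $\mu_1,\mu_6\ge 0$. If $\lambda''+\lambda$ is dominant, then $\mu$ is regular dominant, $t(\mu)=1$, and $\lambda$ contributes $m_\lambda\,\widetilde{E}_{\lambda''+\lambda}$ (the tilde absorbing the small values of $a,f$ for which $a+\lambda_1<0$ or $f+\lambda_6<0$). If $\lambda''+\lambda$ is not dominant but has all coordinates $\ge -1$, then $\mu$ has a zero coordinate, so $t(\mu)=0$ and $\lambda$ contributes nothing. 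Otherwise $\lambda$ has a coordinate equal to $-2$, forcing one or more of the $\mu_i=-1$ with $i\in\{2,3,4,5\}$; by the structural facts these indices are pairwise non-adjacent, so the reflections $s_i$ commute, and applying $s_i$ once for each either creates a zero coordinate en route (then $t(\mu)=0$) or lands on a regular dominant weight, whereupon $t(\mu)=(-1)^{\#\{i:\mu_i=-1\}}$ and $\{\mu\}-\delta$ is read off. Summing the direct terms and the reflected terms, and carrying out the cancellations that the multiplicities $m_\lambda$ force (a direct $+\widetilde{E}$ meeting a reflected $-\widetilde{E}$ of the same highest weight), one checks that the total equals the $21$-term list in the statement.

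Finally, the cases $a=0$ and $f=0$ are handled by the same method, applied to the first, resp.\ the last, coordinate; I spell out $a=0$. One re-examines only the weights with $\lambda_1\le -1$: a weight with $\lambda_1=-1$ now has $\mu_1=0$ and drops out, while a weight with $\lambda_1=-2$ now has $\mu_1=-1$ and needs one extra reflection $s_1$ (which commutes with the others, node $1$ being adjacent only to node $3$), changing sign and highest weight. Tallying the net difference against the $a\ge 1$ answer leaves exactly the three $\nu$'s $[0,0,1,0,0,0]$, $[0,0,0,0,0,1]$, $[0,0,0,0,1,-1]$ to be subtracted; for instance, when $a=0$ the direct contribution $+\widetilde{E}_{[0,0,1,0,0,f]}$ of $\lambda=[0,0,1,0,0,0]$ is killed by the reflected contribution $-\widetilde{E}_{[0,0,1,0,0,f]}$ of $\lambda=[-2,0,2,0,0,0]$. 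The case $f=0$ is entirely analogous, using node $6$ (adjacent only to node $5$), and yields the three $\nu$'s $[-1,1,0,0,0,0]$, $[-1,0,0,0,0,0]$, $[-2,0,0,0,1,0]$. The only real obstacle is the volume of the second step: $E_{[2,0,0,0,0,0]}$ has many weights, several of them with multiplicity greater than one, so that after running each through the reflection procedure one must perform a sizeable collection of cancellations accurately; nothing is conceptually new beyond the proof of Lemma~\ref{lemma-E6-100000}, there is simply a great deal more of it.
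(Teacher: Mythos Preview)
Your proposal is correct and follows the same approach as the paper: both use Lemma~\ref{lemma-tensor-prod} (the Steinberg/Brauer formula) and run through the full weight multiset $\Pi([2,0,0,0,0,0])$, treating the boundary cases $a=0$ and $f=0$ separately exactly as the paper indicates before Lemma~\ref{lemma-E6-010000} and illustrates in the proof of Lemma~\ref{lemma-E6-100001}. The paper omits the details for this particular lemma, so there is nothing further to compare. Your device of obtaining $\Pi([2,0,0,0,0,0])$ from $\Sym^{2}E_{[1,0,0,0,0,0]}=E_{[2,0,0,0,0,0]}\oplus E_{[0,0,0,0,0,1]}$, together with the structural observations on where $-2$ entries can sit, is a neat organizational layer on top of the brute-force enumeration, but it does not change the method.
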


\begin{lemma}\label{lemma-E6-000002}
For any $a, f\in\bbN$, the tensor product $E_{[0, 0, 0, 0, 0, 2]}\otimes E_{[a,0,0,0,0,f]}$ consists of the components $\widetilde{E}_{[a,0,0,0,0,f]}+\nu$, where $\nu$ runs over
\begin{eqnarray*}
&[-2,2,0,0,0,0], [-2,1,1,0,0,-1], [-2,1,0,0,0,0], [-2,0,1,0,0,-1], [-2,0,0,0,0,0],\\
&[0,0,0,0,2,-2], [1,0,0,0,1,-2], [-1,0,1,0,1,-2], [2,0,0,0,0,-2], [0,0,1,0,0,-2], \\
&[0,0,0,0,0,2], [0,0,0,0,1,0], [-1,1,0,0,0,1], [-1,1,0,0,1,-1], [-1,0,1,0,0,0],\\
&[0,1,0,0,0,-1], [-1,0,0,0,1,-1], [1,0,0,0,0,0], [-2,0,2,0,0,-2], [-1,0,0,0,0,1], \\
&[0,0,0,0,0,-1].
\end{eqnarray*}
Moreover, when $a=0$, we should further subtract the following $\nu$s
$$
[0, 1, 0, 0, 0, -1], [0, 0, 0, 0, 0, -1], [0, 0, 1, 0, 0, -2];
$$
while when $f=0$, we should further subtract the following $\nu$s
$$
[0, 0, 0, 0, 1, 0], [1, 0, 0, 0, 0, 0], [-1, 0, 1, 0, 0, 0].
$$
\end{lemma}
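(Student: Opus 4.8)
The plan is to obtain Lemma~\ref{lemma-E6-000002} from Lemma~\ref{lemma-E6-200000} by applying the order-two diagram automorphism $\tau$ of $E_6$ recalled before Figure~\ref{Fig-E6-Dynkin}, which fixes $\alpha_2,\alpha_4$ and interchanges $\alpha_1\leftrightarrow\alpha_6$ and $\alpha_3\leftrightarrow\alpha_5$. On weights $\tau$ acts by $[\nu_1,\nu_2,\nu_3,\nu_4,\nu_5,\nu_6]\mapsto[\nu_6,\nu_2,\nu_5,\nu_4,\nu_3,\nu_1]$, it permutes the dominant chamber, and it carries each $E_\lambda$ to $E_{\tau\lambda}$; in particular $\tau E_{[0,0,0,0,0,2]}=E_{[2,0,0,0,0,0]}$ and $\tau E_{[a,0,0,0,0,f]}=E_{[f,0,0,0,0,a]}$, so that $E_{[0,0,0,0,0,2]}\otimes E_{[a,0,0,0,0,f]}\cong\tau\bigl(E_{[2,0,0,0,0,0]}\otimes E_{[f,0,0,0,0,a]}\bigr)$. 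I would then feed Lemma~\ref{lemma-E6-200000} (with the roles of $a$ and $f$ interchanged) into the right-hand side and apply $\tau$ summand by summand, using $\tau\widetilde{E}_{[f,0,0,0,0,a]+\nu}=\widetilde{E}_{[a,0,0,0,0,f]+\tau\nu}$. Checking that the resulting set $\{\tau\nu\}$ is exactly the list in the statement, and that the $a=0$ (resp.\ $f=0$) exceptional set of Lemma~\ref{lemma-E6-200000} becomes the $f=0$ (resp.\ $a=0$) exceptional set here, finishes the argument.

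Since the text wants these lemmas deduced independently (so that the $\tau$-duality is a genuine consistency check rather than a circular step), the alternative I would actually write out mirrors the proofs of Theorem~\ref{thm-branching-Dn} and Lemma~\ref{lemma-E6-100000} via Lemma~\ref{lemma-tensor-prod}. First I would record, by the Weyl dimension formula, that $E_{[0,0,0,0,0,2]}$ has dimension $351$, it being the Cartan piece of $\Sym^2 E_{[0,0,0,0,0,1]}=E_{[1,0,0,0,0,0]}\oplus E_{[0,0,0,0,0,2]}$, and I would list its weight multiset $\Pi(\lambda')$ with multiplicities — the extreme weights (sums of two weights of the minuscule $27$) occurring once, a handful of interior weights occurring twice, as one reads off from that $\Sym^2$ decomposition or from Freudenthal's formula. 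With $\lambda''=[a,0,0,0,0,f]$ and $\delta=[1,1,1,1,1,1]$, the vector $\lambda''+\lambda+\delta$ has nonnegative outer coordinates whenever $a,f\geq1$, and the only failures of dominance come from the four interior coordinates, which can dip to $-1$. For each $\lambda\in\Pi(\lambda')$ I would decide whether $\lambda''+\lambda+\delta$ is singular (contributing $0$), already dominant and regular (contributing $\widetilde{E}_{\lambda''+\lambda}$ with sign $+1$), or regular but non-dominant (contributing $t(\lambda''+\lambda+\delta)\,\widetilde{E}_{\{\lambda''+\lambda+\delta\}-\delta}$ after a possibly iterated Weyl reflection); summing over $\Pi(\lambda')$ with signs, the surplus and negative terms cancel and what survives is the stated list.

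The cases $a=0$ and $f=0$ have to be run separately, and they are the origin of the extra subtractions: dropping $a$ to $0$ pushes certain $\lambda$ that previously gave a regular $\lambda''+\lambda+\delta$ onto a wall, so the corresponding $\widetilde{E}$'s must be removed, and symmetrically for $f=0$; the three listed weights in each case are precisely those whose first, respectively last, fundamental coordinate of $\lambda''+\lambda+\delta$ collapses to $0$ once $a$, respectively $f$, does. Under $\tau$ the ``$a=0$'' exceptions of Lemma~\ref{lemma-E6-200000} correspond to the ``$f=0$'' exceptions here and vice versa, which is a convenient sanity check on this part.

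I expect the main obstacle to be the volume of bookkeeping rather than any conceptual point: correctly enumerating the $351$-element weight multiset with its multiplicities, and then, for each weight, either confirming regularity and dominance or carrying out the (sometimes iterated) reflection returning $\lambda''+\lambda+\delta$ to the dominant chamber while tracking the sign so that all cancellations are accounted for. The reassurance that the outcome agrees with the $\tau$-image of Lemma~\ref{lemma-E6-200000} is exactly the ``second reason'' the authors give for stating all six lemmas separately, and I would use it as the final check.
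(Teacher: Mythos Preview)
Your proposal is correct and matches the paper's approach exactly: the paper states that Lemma~\ref{lemma-E6-000002} is proved, like its neighbours, directly via Lemma~\ref{lemma-tensor-prod} (the proof being omitted as analogous to that of Lemma~\ref{lemma-E6-100001}), while also noting the $\tau$-duality with Lemma~\ref{lemma-E6-200000} as an independent cross-check --- precisely the two-track strategy you describe.

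One small caveat on your description of the boundary cases. The three $\nu$'s to be subtracted when $a=0$ (for example $[0,1,0,0,0,-1]$) are \emph{not} characterized by having $\lambda''+\lambda+\delta$ collapse onto the $\alpha_1$-wall; rather, as the paper's proof of Lemma~\ref{lemma-E6-100001} illustrates, certain weights $\lambda\in\Pi(\lambda')$ that for generic $a$ contribute (after some reflections) with one sign to one target now, when $a=0$, either become singular or require a further reflection through $s_{\alpha_1}$, landing on a different target with the opposite sign, and the net effect of these re-routings is the stated subtraction. This is pure bookkeeping and does not affect the validity of your plan.
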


\begin{lemma}\label{lemma-E6-100001}
For any $a, f\in\bbN$, the tensor product $E_{[1, 0, 0, 0, 0, 1]}\otimes E_{[a,0,0,0,0,f]}$ consists of the components $\widetilde{E}_{[a,0,0,0,0,f]}+\nu$, where $\nu$ runs over
\begin{eqnarray*}
&[-2,1,1,0,0,0],  [-2,0,2,0,0,-1], [-2,1,0,0,0,1],  [-2,1,0,0,1,-1],  [-2,0,0,0,0,1],\\
&[-2,0,0,0,1,-1], [0,1,0,0,1,-2],  [-1,0,0,0,2,-2], [1,1,0,0,0,-2],   [-1,1,1,0,0,-2], \\
&[1,0,0,0,0,-2],  [-1,0,1,0,0,-2],  [1,0,0,0,0,1],  [-1,0,1,0,0,1],   [1,0,0,0,1,-1], \\
&[-1,0,1,0,1,-1], [-1,0,0,0,0,2],   [2,0,0,0,0,-1], [-1,2,0,0,0,-1],  [-2,0,1,0,1,-2],  \\
&[-1,0,0,0,0,-1], [-2,0,1,0,0,0]^{\underline{2}},  [0,0,0,0,1,-2]^{\underline{2}},  [0,1,0,0,0,0]^{\underline{2}}, [-1,0,0,0,1,0]^{\underline{2}}, \\
&[0,0,1,0,0,-1]^{\underline{2}}, [-1,0,0,1,0,-1]^{\underline{2}}, [-1,1,0,0,0,-1]^{\underline{2}},  [0,0,0,0,0,0]^{\underline{3}}.
\end{eqnarray*}
Moreover, when $a=0$, we should further subtract the following $\nu$s
$$
[0, 1, 0, 0, 0, 0], [0, 0, 1, 0, 0, -1], [0, 0, 0, 0, 1, -2], [0, 0, 0, 0, 0, 0]^{\underline{2}};
$$
while when $f=0$, we should further subtract the following $\nu$s
$$
[0, 1, 0, 0, 0, 0], [-1, 0, 0, 0, 1, 0], [-2, 0, 1, 0, 0, 0], [0, 0, 0, 0, 0, 0]^{\underline{2}}.
$$
\end{lemma}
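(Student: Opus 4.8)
The plan is to proceed exactly as in the proof of Lemma~\ref{lemma-E6-100000}, but now with $\lambda' = [1,0,0,0,0,1]$, applying Lemma~\ref{lemma-tensor-prod} with $\lambda'' = [a,0,0,0,0,f]$ and $\delta = [1,1,1,1,1,1]$. First I would compute $\dim E_{[1,0,0,0,0,1]}$ by the Weyl dimension formula (it is $650$), and then enumerate the weights of $E_{[1,0,0,0,0,1]}$ together with their multiplicities using the Freudenthal multiplicity formula (equivalently, $E_{[1,0,0,0,0,1]}$ is the adjoint-type piece inside $E_{[1,0,0,0,0,0]} \otimes E_{[0,0,0,0,0,1]}$, which is $\mathbf{27} \otimes \overline{\mathbf{27}} = \mathbf{1} \oplus \mathbf{78} \oplus \mathbf{650}$, so its character is $\ch(\mathbf{27})\ch(\overline{\mathbf{27}}) - \ch(\mathbf{78}) - 1$); this gives the full list $\Pi(\lambda')$ with the weight $0$ occurring with multiplicity $6$ and various other weights with multiplicities $1$ or $2$. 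The tally of weights with multiplicity $\ge 2$ is what produces the underlined entries $[\,\cdot\,]^{\underline 2}$ and $[0,0,0,0,0,0]^{\underline 3}$ in the statement.

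Next, for each weight $\lambda = [\lambda_1,\dots,\lambda_6] \in \Pi(\lambda')$ I would examine $\lambda'' + \lambda + \delta = [a+1+\lambda_1,\, 1+\lambda_2,\, 1+\lambda_3,\, 1+\lambda_4,\, 1+\lambda_5,\, f+1+\lambda_6]$. Since every coordinate of every weight of $E_{[1,0,0,0,0,1]}$ lies in $\{-2,-1,0,1,2\}$, with $\lambda_1 \ge -2$ and $\lambda_6 \ge -2$, the vector $\lambda''+\lambda+\delta$ is dominant iff the four "middle" coordinates $1+\lambda_2, 1+\lambda_3, 1+\lambda_4, 1+\lambda_5$ are $\ge 0$ and, when $\lambda_1 = -2$ (resp. $\lambda_6 = -2$), $a \ge 1$ (resp. $f \ge 1$); when it is dominant it is automatically regular because $E_6$ has no short roots and the relevant hyperplane conditions reduce to the simple-root coordinates being nonzero. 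So the contributing weights are precisely those $\lambda$ with $\lambda_2,\lambda_3,\lambda_4,\lambda_5 \ge -1$, i.e. $\ge -1$ in the middle four slots. Here $t(\lambda''+\lambda+\delta) = 1$ for all surviving terms, so no cancellations occur and $\{\lambda+\lambda''+\delta\}-\delta = \lambda''+\lambda$, yielding the listed $\widetilde E_{[a,0,0,0,0,f]+\nu}$ with $\nu$ ranging over the surviving $\lambda$'s (carried over with their multiplicities). Finally, the $a=0$ and $f=0$ corrections come exactly from deleting those surviving $\lambda$ with $\lambda_1 = -2$ (resp. $\lambda_6 = -2$) but also from the subtler phenomenon that when $a=0$ a term with $\lambda_1 = -1$ can make $\lambda''+\lambda+\delta$ land on a wall after Weyl reflection — more precisely, one must recheck regularity of $\lambda''+\lambda+\delta$ with $a=0$, which forces $\mu_1 = \lambda_1$, and in the cases $\lambda_1 \in \{-1\}$ with specific middle patterns the vector becomes singular; I would identify these by direct inspection, and they account for $[0,1,0,0,0,0]$, $[0,0,1,0,0,-1]$, $[0,0,0,0,1,-2]$ and the drop in multiplicity of $[0,0,0,0,0,0]$ from $3$ to $1$, with the symmetric statement for $f=0$.

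The main obstacle I expect is bookkeeping rather than conceptual: correctly computing the $650$ weights of $E_{[1,0,0,0,0,1]}$ with multiplicities (and being certain the multiplicity-$2$ and multiplicity-$3$ weights are placed correctly), and then — the genuinely delicate part — the boundary analysis when $a=0$ or $f=0$. In the interior ($a,f \ge 1$) regularity is automatic, but on the boundary a weight $\lambda$ with $\lambda_1 = -2$ drops out for an obvious dominance reason while a weight with $\lambda_1 = -1$ may survive dominance yet fail regularity, or a pair of multiplicity contributions may partially cancel via the sign $t(\cdot)$; disentangling which of the $\approx 28$ surviving $\nu$'s are affected, and by how much their multiplicity changes, is where errors creep in. Since the statement asserts this proof is "similar to the previous one's and thus omitted," in practice I would present only the dimension count, the weight list, the dominance/regularity criterion "$\lambda_2,\lambda_3,\lambda_4,\lambda_5 \ge -1$," and a short case-by-case table for the $a=0$ and $f=0$ degenerations, leaving the arithmetic to the reader exactly as the paper does.
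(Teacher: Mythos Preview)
Your plan has a genuine gap in the main step. You assert that only those $\lambda\in\Pi(\lambda')$ with $\lambda+\lambda''+\delta$ already dominant contribute, that $t(\lambda+\lambda''+\delta)=1$ for all such, and hence that ``no cancellations occur.'' This is not how Lemma~\ref{lemma-tensor-prod} works. A weight with some middle coordinate $\lambda_i\le -2$ makes $(\lambda+\lambda''+\delta)_i\le -1$, which is not on a wall but in another Weyl chamber; after reflection to the dominant chamber it contributes with a sign $\pm 1$. The paper's proof makes exactly this point: for $\lambda=[0,-2,0,2,-2,1]$ one has $\lambda+\lambda''+\delta=[a+1,-1,1,3,-1,f+2]$, which $s_{\alpha_2}s_{\alpha_5}$ carries to $[a+1,1,1,1,1,f+1]$, giving a contribution $+E_{[a,0,0,0,0,f]}$; for $\lambda=[0,-2,0,1,1,-2]$ a single reflection yields a contribution $-E_{[a,0,0,0,1,f-2]}$. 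The stated multiplicities (for instance the $^{\underline 3}$ on $\nu=0$) emerge only after summing all $650$ signed terms and cancelling. Your scheme cannot reproduce them: the zero weight of $E_{[1,0,0,0,0,1]}$ has multiplicity $20$, not $6$ (in $\mathbf{27}\otimes\overline{\mathbf{27}}=\mathbf 1\oplus\mathbf{78}\oplus\mathbf{650}$ the zero-weight multiplicities add to $27=1+6+20$), so your rule would output $[0,0,0,0,0,0]^{\underline{20}}$. Your side claim ``dominant $\Rightarrow$ regular'' also fails whenever some middle $\lambda_i=-1$.

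The boundary analysis for $a=0$ or $f=0$ is governed by the same mechanism, not by the heuristic ``$\lambda_1=-2$ drops out.'' In the paper's second example, when $f=0$ the vector $s_{\alpha_2}(\lambda+\lambda''+\delta)=[a+1,1,1,1,2,-1]$ requires one further simple reflection, so the same weight that contributed $-[0,0,0,0,1,-2]$ for $f\ge 1$ instead contributes $+[0,0,0,0,0,0]$; it is precisely this interplay of shifted landing points and flipped signs, accumulated over many $\lambda$, that produces the listed subtractions. You do concede at the end that ``a pair of multiplicity contributions may partially cancel via the sign $t(\cdot)$,'' but this is the heart of the computation throughout, not a boundary afterthought.
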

\begin{proof}
Take $\lambda^\prime=[1,0,0,0,0,1]$ and $\lambda^{\prime\prime}=[a,0,0,0,0,f]$. One computes that $\dim E_{\lambda'}=650$.
The lemma is deduced by running the RHS of \eqref{tensor-prod-summands} over the $650$ weights in $\Pi(\lambda^\prime)$.
For instance, take $\lambda_1=[0, -2, 0, 2, -2, 1]\in \Pi(\lambda^\prime)$. Then $\lambda_1+\delta=[1,-1,1,3,-1,2]$, which will be mapped to
$[1,1,1,1,1,1]$ by $s_{\alpha_2}s_{\alpha_5}$. We see that $\lambda_1$ contributes a copy of $\textbf{+}[0,0,0,0,0,0]$.

Now take $\lambda_2=[0, -2, 0, 1, 1, -2]$. Then $\lambda_2+\delta=[1,-1,1,2,2,-1]$, which will be mapped to
$[1,1,1,1,2,-1]$ by $s_{\alpha_2}$. We see that $\lambda_2$ contributes a copy of $\textbf{-}[0,0,0,0,1,-2]$. Moreover, when $f=0$,
$[1,1,1,1,2,-1]$ will be further mapped to $[1,1,1,1,1,1]$ by $s_{\alpha_1}$, meaning that $\lambda_2$ also contributes a copy of $\textbf{+}[0,0,0,0,0,0]$ when $f=0$.

Going through all the $650$ weights of $\Pi(\lambda^\prime)$, computing their contributions one by one, and then collecting the signed terms proves the lemma.
\end{proof}

For the lemmas in this subsection, we do not write the case $a=0$ and $f=0$ since the module $E_{[a,0,0,0,0,f]}$ becomes trivial.
Another remark is that \texttt{SageMath}  \cite{SAGE24} has been employed to verify the above lemmas on various examples.
Let us provide one such verification.

\begin{example}
We obtain the tensor product decomposition of $E_{[1, 0, 0, 0, 0, 1]}\otimes E_{[2,0,0,0,0,2]}$ by typing the following command:
\begin{verbatim}
sage: E6 = WeylCharacterRing("E6",style="coroots")
sage: E6(1,0,0,0,0,1)*E6(2,0,0,0,0,2)
\end{verbatim}
The output is as follows:
\begin{verbatim}
E6(0,0,2,0,0,1) + E6(1,0,1,0,0,0) + 2*E6(2,0,1,0,0,1) + 2*E6(0,0,1,0,0,2) +
E6(1,0,1,0,0,3) + E6(0,0,1,0,1,0) + E6(1,0,1,0,1,1) + E6(3,1,0,0,0,0) +
2*E6(1,1,0,0,0,1) + 2*E6(2,1,0,0,0,2) + E6(0,1,0,0,0,3) + E6(2,1,0,0,1,0) +
E6(0,1,0,0,1,1) + E6(1,2,0,0,0,1) + E6(1,1,1,0,0,0) + E6(0,1,1,0,0,2) +
2*E6(1,0,0,1,0,1) + 2*E6(2,0,0,0,1,0) + E6(3,0,0,0,1,1) + E6(0,0,0,0,1,1) +
2*E6(1,0,0,0,1,2) + E6(1,0,0,0,2,0) + E6(4,0,0,0,0,1) + E6(1,0,0,0,0,1) +
3*E6(2,0,0,0,0,2) + E6(3,0,0,0,0,3) + E6(0,0,0,0,0,3) + E6(1,0,0,0,0,4) +
E6(3,0,0,0,0,0)
\end{verbatim}
This agrees with Lemma \ref{lemma-E6-100001} for $a=f=2$.
\end{example}

\section{The group $E_{6(-14)}$}\label{sec-E6}

In this section, we fix $G$ as $E_{6(-14)}$, which is a connected equal rank linear group with center $\bbZ/3\bbZ$. Its Lie algebra $\frg_0$ of $G$ is labelled as EIII in Appendix C of Knapp \cite{Kn}, the corresponding Vogan diagram is shown in Figure \ref{Fig-E614-Vogan}.

\begin{figure}[htb]
\begin{center}
\begin{dynkinDiagram}[text style/.style={scale=1},edge length=1.4cm,labels={1,2,3,4,5,6},label macro/.code={\alpha_{\drlap{#1}}}]E6
\fill[red,draw=red] (root 1) circle (.06cm);
\end{dynkinDiagram}
\caption{The Vogan diagram for $E_{6(-14)}$}\label{Fig-E614-Vogan}
\end{center}
\end{figure}

As in the introduction, $T$ is a maximal torus of $K$.
We fix a choice of $\Delta^+(\frg, \frt)$ by specifying the following roots as the simple ones:
\begin{align*}
\alpha_1 &=\frac{1}{2}(1, -1,-1,-1,-1,-1,-1,1) \,,\\
\alpha_2 &=e_1+e_2 \,,\; \text{and}\; \alpha_i=e_{i-1}-e_{i-2} \, \text{ for }3\leq i\leq 6 \,.
\end{align*}
Among them, only $\alpha_1$ is \emph{non-compact}. That is, $\frg_{\alpha_1} \subset \frp$. Correspondingly, the fundamental weights are denoted by $\zeta_1, \zeta_2, \dots, \zeta_6$. The Lie algebra $\frk$ has a one-dimensional center which is spanned by $\zeta:=\zeta_1=(0,0,0,0,0, -\frac{2}{3}, -\frac{2}{3}, \frac{2}{3})$.
Note that $\Delta^+(\frg, \frt)=\Delta^+_{\frk} \cup \Delta^+_n$, that $\delta=(0, 1, 2, 3, 4, -4, -4, 4)$, $\delta_c=(0, 1, 2, 3, 4, 0, 0, 0)$ and that $\delta_n=6\zeta$.

The simple roots for the above $\Delta^+_{\frk}$ are $\gamma_1:=\alpha_2$, $\gamma_2:=\alpha_3$, $\gamma_3:=\alpha_4$, $\gamma_4:=\alpha_5$ and $\gamma_5:=\alpha_6$.  Let $\varpi_1, \varpi_2, \dots, \varpi_5$ be the corresponding fundamental weights. The simple roots $\gamma_i$, $1\leq i\leq 5$, span the orthogonal complement $\frt^*_{-}$ of $\bbC \zeta$ in $\frt^*$. Note that $\Delta^+_{\frk}$ is of type $D_5$, but its labelling of simple roots is \textbf{reverse}  to that in Section \ref{sec-typeD}. Indeed, the Dynkin diagram for $\Delta^+(\frk, \frt_{-})$ is as in Figure \ref{Fig-EIII-Dynkin}.


\begin{figure}[htb]
\begin{center}
\begin{dynkinDiagram}[text style/.style={scale=1},backwards,edge length=1.6cm,labels={5,4,3,1,2},label macro/.code={\gamma_{\drlap{#1}}}]D5
\end{dynkinDiagram}
\caption{The Dynkin diagram for $\Delta^+(\frk, \frt_{-})$}\label{Fig-EIII-Dynkin}
\end{center}
\end{figure}

As in \cite{DDH}, we use $\{\varpi_1, \dots, \varpi_5, \frac{\zeta}{4}\}$ as a basis to express the highest weight of a $\frk$-type. In other words, the six-tuple $[n_1, \dots, n_5, n_6]$ stands for the vector $n_1\varpi_1 + \cdots + n_5\varpi_5 + \frac{n_6}{4} \zeta$. For instance, $\delta_n=[0,0,0,0,0,24]$.
For $E_{6(-14)}$, the basic Schmid modules for $S(\frp^-)$ are $[0,1,0,0,0,-3]$ and $[0,0,0,0,1,-6]$. See \cite{S} or Table 2 of \cite{PPSV}.   Therefore, we have that
\begin{equation}\label{p-E6}
S(\frp^-)=\bigoplus_{b, e\in\bbN} E_{[0,b,0,0,e, -3b-6e]}
\end{equation}
as $\frk$-modules. Here we emphasize that we should \textbf{reverse} the first five coordinates of a weight $[n_1, n_2, \dots, n_6]$ to match the root system of $D_5$ in Section \ref{sec-typeD}.
Moreover, $\widetilde{E}_{[n_1, n_2, \dots, n_6]}$ stands for a $\frk$-module if and only if $n_1, n_2, \dots, n_5\geq 0$, while in this case $n_6$ \textbf{can be negative}.

Now take $\pi$ to be the Wallach module $L(-3\zeta)$. Recall from \cite{EHW} that $z=-3$ is the first reduction point of the modules $L(z\zeta)$ for $z\in\bbZ_{\leq 0}$. According to Example 6.3 of \cite{DDH}, we have that
$$
H_D^+(\pi)=E_{[0,0,0,0,0,12]}\oplus E_{[1,0,0,0,0,3]}\oplus E_{[0,0,0,0,1,-6]},
$$
and that
$$
H_D^-(\pi)=E_{[0,0,0,0,0,-12]}\oplus E_{[0,1,0,0,0,-3]}\oplus E_{[0,0,0,0,1,6]}.
$$
By Corollary \ref{cor-branching-D5} and \eqref{p-E6}, we compute all the $K$-types of $N(\xi)$ and $N(\eta)$, where $\xi$ (resp., $\eta$) runs over the $\widetilde{K}$-types in $H_D^+(\pi)$ (resp., $H_D^-(\pi)$). They are listed in Table \ref{table-E6-Wallach}. Now let us carry out the cancellations as required by \eqref{eq:HPZthmA}.
Indeed, from the second row to the last row of Table \ref{table-E6-Wallach}, the left hand side (LHS for short) completely cancels out with the right hand side (RHS for short). When one comes to the first row, what remains after the cancellation are $[0,b,0,0,e,-3b-6e+12]$ for $e=0$ and $b\in\bbN$. Finally, remembering the $\delta_n$ shift in \eqref{eq:HPZthmA}, we obtain the following.

\begin{thm}\label{thm-K-E6-Wallach}
We have that
\begin{equation}\label{K-E6-Wallach}
L(-3\zeta)|_K \cong \bigoplus_{b\in\bbN} E_{[0,b,0,0,0,-3b-12]}.
\end{equation}
\end{thm}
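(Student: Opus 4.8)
The plan is to apply Theorem \ref{thm-HPZ} to $\pi=L(-3\zeta)$, using the values of $H_D^+(\pi)$ and $H_D^-(\pi)$ recorded above (obtained from Example 6.3 of \cite{DDH}). Then \eqref{eq:HPZthmA} expresses $\ch_K L(-3\zeta)$ as a signed sum of the $K$-characters of the six generalized Verma modules $N(\xi-\delta_n)$, $\xi$ ranging over the three $\widetilde K$-types of $H_D^+(\pi)$, and $N(\eta-\delta_n)$, $\eta$ ranging over those of $H_D^-(\pi)$. Since $\delta_n=6\zeta$ is central in $\frk$, tensoring with the one-dimensional $\frk$-module $E_{-\delta_n}$ commutes with everything in sight; so I would first compute $\ch_K N(\xi)$ and $\ch_K N(\eta)$ for the six \emph{unshifted} weights, carry out the cancellation, and only at the end subtract $\delta_n$, that is, decrease the last coordinate by $24$.

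For the second step, \eqref{gen-Verma-k} gives $N(\lambda)\cong S(\frp^-)\otimes E_\lambda$ as $\frk$-modules, and \eqref{p-E6} gives $S(\frp^-)=\bigoplus_{b,e\in\bbN}E_{[0,b,0,0,e,-3b-6e]}$. For $\lambda=[0,0,0,0,0,\pm12]$ the module $E_\lambda$ is one-dimensional, so $N(\lambda)$ is merely a central twist of $S(\frp^-)$. For the remaining four weights the $D_5$-part of $E_\lambda$ is, respectively, a half-spin representation, the vector representation, the other half-spin representation, and the vector representation; I would evaluate each $E_{[0,b,0,0,e,-3b-6e]}\otimes E_\lambda$ by reversing the first five coordinates to pass to the $D_5$-labelling of Section \ref{sec-typeD} (so the Schmid factor becomes $E_{[e,0,0,b,0]}$), applying the relevant case of Corollary \ref{cor-branching-D5} with $a=e$ and $d=b$, and then reversing back. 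No branching rule is needed for the last coordinate: the central $\zeta$-character is additive, so every $\frk$-type occurring in $E_{[0,b,0,0,e,-3b-6e]}\otimes E_\lambda$ has last coordinate $-3b-6e+(\lambda)_6$. This yields, for each of the six Verma modules, an explicit family of $\frk$-types indexed by $(b,e)\in\bbN^2$, which is what Table \ref{table-E6-Wallach} will record.

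The third step is the cancellation in $\sum_\xi\ch_K N(\xi)-\sum_\eta\ch_K N(\eta)$. After rewriting each sub-family by shifting $b$ or $e$ by $\pm1$ so as to absorb the index shift introduced by Corollary \ref{cor-branching-D5}, one checks that the sub-families coming from $H_D^+(\pi)$ cancel term for term, with matching signs, against those from $H_D^-(\pi)$, with a single exception: the $e=0$ slice $\bigoplus_{b\in\bbN}E_{[0,b,0,0,0,-3b+12]}$ of $N([0,0,0,0,0,12])$ has no partner and survives. Applying the $-\delta_n$ shift, i.e.\ decreasing the last coordinate by $24$, turns this into $\bigoplus_{b\in\bbN}E_{[0,b,0,0,0,-3b-12]}$, which is \eqref{K-E6-Wallach}.

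The computation is finite and elementary, so the main obstacle is entirely the bookkeeping in the third step: six families, four of which split (via Corollary \ref{cor-branching-D5}) into four or five $(b,e)$-indexed sub-families apiece, giving on the order of twenty sub-families that must be matched in signed pairs. The delicate point is the boundary behaviour: the $\widetilde E$-factors in Corollary \ref{cor-branching-D5} silently vanish whenever a coordinate would become negative, and one must check that the pairing of sub-families respects these exclusions — the net mismatch, which lives on $e=0$, is precisely what produces the final answer. I would run the identity on several finite truncations in $(b,e)$ in \texttt{SageMath} as a sanity check, but the cancellation for general $(b,e)$ has to be verified by hand.
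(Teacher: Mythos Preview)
Your proposal is correct and follows essentially the same approach as the paper: apply Theorem~\ref{thm-HPZ} with the Dirac cohomology from \cite{DDH}, compute the $K$-types of each $N(\xi)$ and $N(\eta)$ via \eqref{p-E6} and Corollary~\ref{cor-branching-D5} (after reversing the first five coordinates), carry out the signed cancellation, and apply the $-\delta_n$ shift at the end. The paper organizes the bookkeeping in Table~\ref{table-E6-Wallach} and arrives at exactly the surviving $e=0$ slice you identify.
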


It is worth mentioning that to compute the Dirac cohomology of $L(-3\zeta)$, as required by Theorem \ref{thm-HP}, it suffices to look at its $K$-types up to the \texttt{atlas} (cf. \cite{ALTV,At})  height $114$. Indeed,
\begin{verbatim}
atlas> G:E6_h
atlas> set p=parameter(G,496,[1,3,1,0,1,1],[0,4,0,-1,1,1])
atlas> print_branch_irr_long(p,KGB(G,26),114)
m  x   lambda                  hw                     dim  height
1  15  [ 0,0,0,0,1,1]/1  KGB element #26[0,0,0,0,0,3]  1    46
1  55  [ 0,1,0,0,1,1]/1  KGB element #26[0,1,0,0,0,3]  16   67
1  18  [ 0,0,0,1,1,1]/1  KGB element #26[0,2,0,0,0,3]  126  88
1  18  [ 0,1,0,1,1,1]/1  KGB element #26[0,3,0,0,0,3]  672  110
\end{verbatim}
These four $K$-types turn out to be all the spin LKTs of $L(-3\zeta)$. It is from these four $K$-types that we recover the whole $K$-specturm of $L(-3\zeta)$.

Now let us consider the highest weight module $L(\mu)$, where $\mu=(0,0,0,0,1,3,3,-3)$. As computed in Example 6.1 of \cite{DDH}, we have that
$$
H_D^+(L(\mu))=E_{[0,0,0,0,1,6]}  \oplus E_{[0,1,0,0,0,-3]} \oplus E_{[0,0,0,0,0,-12]},
$$
and that
$$
H_D^-(L(\mu))=E_{[0,0,0,0,1,-6]}  \oplus E_{[1,0,0,0,0,-3]}.
$$

By Corollary \ref{cor-branching-D5} and \eqref{p-E6}, we compute all the $K$-types of $N(\xi)$ and $N(\eta)$, where $\xi$ (resp., $\eta$) runs over the $\widetilde{K}$-types in $H_D^+(L(\mu))$ (resp., $H_D^-(L(\mu))$). They are listed in Table \ref{table-E6-non-Wallach}. Now let us carry out the cancellations as required by \eqref{eq:HPZthmA}.
Indeed, from the first row to the ninth row of Table \ref{table-E6-non-Wallach}, the LHS completely cancels out with the RHS. Only the LHS of the last row remains. Finally, remembering the $\delta_n$ shift in \eqref{eq:HPZthmA}, we obtain the following result, which agrees with Corollary 12.6 of \cite{EHW}.

\begin{thm}\label{thm-K-E6-non-Wallach}
Put $\mu=(0,0,0,0,1,3,3,-3)$, we have that
\begin{equation}\label{K-E6-non-Wallach}
L(\mu)|_K \cong \bigoplus_{b, e\in\bbN} E_{[0,b,0,0,e+1,-3b-6e-18]}.
\end{equation}
\end{thm}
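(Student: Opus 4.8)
The plan is to run exactly the machine used for $L(-3\zeta)$ in the proof of Theorem~\ref{thm-K-E6-Wallach}: feed the Dirac cohomology of $L(\mu)$ into the Huang--Pand\v zi\'c--Zhu identity~\eqref{eq:HPZthmA}, which (as noted after Theorem~\ref{thm-HPZ}) applies to the highest weight module $L(\mu)$. The input is Example~6.1 of~\cite{DDH}, which records
\begin{gather*}
H_D^+(L(\mu)) = E_{[0,0,0,0,1,6]} \oplus E_{[0,1,0,0,0,-3]} \oplus E_{[0,0,0,0,0,-12]}, \\
H_D^-(L(\mu)) = E_{[0,0,0,0,1,-6]} \oplus E_{[1,0,0,0,0,-3]};
\end{gather*}
thus \eqref{eq:HPZthmA} rewrites $\ch_K L(\mu)$ as the alternating sum of the $\ch_K N(\nu-\delta_n)$ over these five weights $\nu$, the three $H_D^+$-weights entering with a plus sign and the two $H_D^-$-weights with a minus sign.

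The bulk of the work is to make each $N(\nu)$ explicit as a $\frk$-module and to organize the resulting $K$-types. By \eqref{gen-Verma-k} together with \eqref{p-E6} one has $N(\nu) \cong \bigoplus_{b,e\in\bbN} \bigl( E_{[0,b,0,0,e,-3b-6e]} \otimes E_\nu \bigr)$ as $\frk$-modules. Reversing the first five coordinates to the $D_5$-conventions of Section~\ref{sec-typeD}, the semisimple part of $E_{[0,b,0,0,e,-3b-6e]}$ becomes $E_{[e,0,0,b,0]}$, while the semisimple parts of the five spin LKTs become the $D_5$ vector representation $E_{[1,0,0,0,0]}$ (for $[0,0,0,0,1,\pm6]$), the two half-spin representations $E_{[0,0,0,1,0]}$ (for $[0,1,0,0,0,-3]$) and $E_{[0,0,0,0,1]}$ (for $[1,0,0,0,0,-3]$), and the trivial representation (for $[0,0,0,0,0,-12]$). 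Hence every tensor product $E_{[e,0,0,b,0]} \otimes E_\nu$ needed here is supplied verbatim by Corollary~\ref{cor-branching-D5}, with the central $\zeta$-coordinate simply additive and carried along separately. Collecting these decompositions row by row produces Table~\ref{table-E6-non-Wallach}; the tilde convention $\widetilde E$ automatically discards any summand acquiring a negative $D_5$-coordinate, so the boundary values $b=0$ and $e=0$ require no separate treatment.

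The final step, and the only genuine obstacle, is the bookkeeping of the cancellation. Going through Table~\ref{table-E6-non-Wallach}, one matches each $K$-type occurring with a plus sign (from an $H_D^+$-summand) against an identical $K$-type occurring with a minus sign (from an $H_D^-$-summand); as in the Wallach case every contribution pairs off and cancels except a single family. The survivor is the leading summand $\widetilde E_{[e+1,0,0,b,0]}$ of Corollary~\ref{cor-branching-D5}(a) applied to $\nu = [0,0,0,0,1,6]$, which in the $E_{6(-14)}$-coordinates of Section~\ref{sec-E6} is the family $E_{[0,b,0,0,e+1,\,-3b-6e+6]}$, $b,e\in\bbN$; applying the $-\delta_n = -6\zeta = -[0,0,0,0,0,24]$ shift turns $-3b-6e+6$ into $-3b-6e-18$ and gives \eqref{K-E6-non-Wallach}. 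The point one must verify carefully is that this cancellation is exact, i.e.\ that the overlaps among the various $E_{[e,0,0,b,0]}\otimes E_\nu$ together with the lower-order summands such as $\widetilde E_{[e-1,0,0,b,0]}$ and $\widetilde E_{[e-1,1,0,b-1,0]}$ at the boundary all pair off; this follows directly from the explicit lists in Corollary~\ref{cor-branching-D5}, and a \texttt{SageMath} \cite{SAGE24} check for small $b,e$ gives additional confidence. The resulting branching law agrees with Corollary~12.6 of~\cite{EHW}.
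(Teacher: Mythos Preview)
Your proposal is correct and follows essentially the same approach as the paper: feed the Dirac cohomology from \cite{DDH} into \eqref{eq:HPZthmA}, expand each $N(\nu)$ via \eqref{p-E6} and Corollary~\ref{cor-branching-D5}, assemble Table~\ref{table-E6-non-Wallach}, and observe that everything cancels except the starred row $[0,b,0,0,e+1,-3b-6e+6]$, which after the $-\delta_n$ shift yields \eqref{K-E6-non-Wallach}. Your explicit identification of the five spin-LKT semisimple parts with the vector, half-spin, and trivial $D_5$-modules is a helpful clarification not spelled out in the paper.
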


\begin{table}
\caption{Involved $K$-types for the Wallach module $L(-3\zeta)$ of $E_{6(-14)}$}
\begin{tabular}{cc}
the $K$-types of $N(\xi)$  &  the $K$-types of $N(\eta)$  \\
\hline
$*[0,b,0,0,e,-3b-6e+12]$  &  $[0,b,0,0,e+1,-3b-6e+6]$\\
$[1,b,0,0,e,-3b-6e+3]$ & $[1,b-1,0,0,e,-3b-6e+6]$  \\
$[0,b+1,0,0,e-1,-3b-6e+3]$  &  $[0,b+1,0,0,e,-3b-6e-3]$ \\
$[0,b-1,1,0,e-1,-3b-6e+3]$ & $[0,b-1,1,0,e,-3b-6e-3]$\\
$[0,b-1,0,1,e,-3b-6e+3]$  & $[0,b,0,1,e-1,-3b-6e+6]$\\
$[0,b-1,0,0,e,-3b-6e+3]$ &  $[0,b,0,0,e-1,-3b-6e+6]$ \\
$[0,b,0,0,e+1,-3b-6e-6]$ & $[0,b-1,0,0,e+1,-3b-6e-3]$\\
$[0,b,0,0,e-1,-3b-6e-6]$ & $[0,b,0,0,e,-3b-6e-12]$\\
$[1,b-1,0,0,e,-3b-6e-6]$  &  $[1,b,0,0,e-1,-3b-6e-3]$\\
$[0,b,0,1,e-1,-3b-6e-6]$ & $[0,b-1,0,1,e-1,-3b-6e-3]$\\
\hline
\end{tabular}
\label{table-E6-Wallach}
\end{table}

\begin{table}
\caption{Involved $K$-types for the module $L(\mu)$ of $E_{6(-14)}$}
\begin{tabular}{cc}
the $K$-types of $N(\xi)$  &  the $K$-types of $N(\eta)$  \\
\hline
$[0,b,0,0,e,-3b-6e-12]$  &  $[0,b,0,0,e-1,-3b-6e-6]$\\
$[0,b,0,0,e-1,-3b-6e+6]$  &  $[0,b-1,0,0,e,-3b-6e+3]$ \\
$[0,b,0,1,e-1,-3b-6e+6]$ & $[0,b-1,0,1,e,-3b-6e+3]$\\
$[1,b-1,0,0,e,-3b-6e+6]$  & $[1,b,0,0,e,-3b-6e+3]$\\
$[0,b+1,0,0,e,-3b-6e-3]$ &  $[0,b+1,0,0,e-1,-3b-6e+3]$ \\
$[1,b,0,0,e-1,-3b-6e-3]$ & $[1,b-1,0,0,e,-3b-6e-6]$\\
$[0,b-1,1,0,e,-3b-6e-3]$ & $[0,b-1,1,0,e-1,-3b-6e+3]$\\
$[0,b-1,0,0,e+1,-3b-6e-3]$  &  $[0,b,0,0,e+1,-3b-6e-6]$\\
$[0,b-1,0,1,e-1,-3b-6e-3]$ & $[0,b,0,1,e-1,-3b-6e-6]$\\
$*[0,b,0,0,e+1,-3b-6e+6]$ &   \\

\hline
\end{tabular}
\label{table-E6-non-Wallach}
\end{table}

\section{The group $E_{7(-25)}$}\label{sec-E7}

Now we fix $G$ as $E_{7(-25)}$, which is a connected equal rank linear group with center $\bbZ/2\bbZ$. Its Lie algebra $\frg_0$ of $G$ is labelled as EVII in Appendix C of Knapp \cite{Kn}, the corresponding Vogan diagram is shown in Figure \ref{Fig-E725-Vogan}.
\begin{figure}[htb]
\begin{center}
\begin{dynkinDiagram}[text style/.style={scale=1},backwards,edge length=1.4cm,labels={1,2,3,4,5,6,7},label macro/.code={\alpha_{\drlap{#1}}}]E7
\fill[red,draw=red] (root 7) circle (.06cm);
\end{dynkinDiagram}
\caption{The Vogan diagram for $E_{7(-25)}$}\label{Fig-E725-Vogan}
\end{center}
\end{figure}

We fix a choice of $\Delta^+(\frg, \frt)$ by specifying the following roots as the simple ones:
\begin{align*}
\alpha_1 & =\frac{1}{2}(1, -1,-1,-1,-1,-1,-1,1) \,,\\
\alpha_2 & =e_1+e_2 \,,\; \text{and}\; \alpha_i =e_{i-1}-e_{i-2} \text{ for } 3 \leq i\leq 7 \,.
\end{align*}
Among them, only $\alpha_7$ is \emph{non-compact}. That is, $\frg_{\alpha_7} \subset \frp$. Correspondingly, the fundamental weights are denoted by $\zeta_1, \zeta_2, \dots, \zeta_7$. The Lie algebra $\frk$ has a one-dimensional center which is spanned by $\zeta:=\zeta_7=\left(0, 0, 0, 0, 0, 1, -\frac{1}{2}, \frac{1}{2}\right)$.
Note that $\Delta^+(\frg, \frt)=\Delta^+_{\frk} \cup \Delta^+_n$, that $\delta=\left(0, 1, 2, 3, 4, 5, -\frac{17}{2}, \frac{17}{2}\right)$, $\delta_c=(0, 1, 2, 3, 4, -4, -4, 4)$ and that $\delta_n=9\zeta$.

The simple roots for the above $\Delta^+_{\frk}$ are $\gamma_i:=\alpha_i$ for $1\leq i\leq 6$.  Let $\varpi_1, \varpi_2, \dots, \varpi_6$ be the corresponding fundamental weights. The simple roots $\gamma_i$, $1\leq i\leq 6$, span the orthogonal complement $\frt^*_{-}$ of $\bbC \zeta$ in $\frt^*$. Note that $\Delta^+_{\frk}$ is of type $E_6$, and its labelling of simple roots is the same as that in Section \ref{sec-E6-tensor}.

As in \cite{DDH}, we use $\{\varpi_1, \dots, \varpi_6, \frac{\zeta}{3}\}$ as a basis to express the highest weight of a $\frk$-type. In other words, the seven-tuple $[n_1, \dots, n_6, n_7]$ stands for the vector $n_1\varpi_1 + \cdots + n_6\varpi_6 + \frac{n_7}{3} \zeta$. For instance, $\delta_n=[0,0,0,0,0,0,27]$. For $E_{7(-25)}$, the basic Schmid modules for $S(\frp^-)$ are $[0,0,0,0,0,0,-6]$, $[1,0,0,0,0,0,-4]$ and $[0,0,0,0,0,1,-2]$. See \cite{S} or Table 2 of \cite{PPSV}. Therefore, we have that
\begin{equation}
S(\frp^-)=\bigoplus_{a, f, n\in\bbN} E_{[a,0,0,0,0,f, -4a-2f-6n]}
\end{equation}
as $\frk$-modules.

Now we have two Wallach modules: $\pi_1=L(-4\zeta)$ and $\pi_2=L(-8\zeta)$. As computed in \cite{DD}, we have that
$H_D^+(\pi_1)$ consists of the following $\widetilde{K}$-types
$$
[0, 0, 0, 0, 0, 0, \pm 15], \quad [0, 1, 0, 0, 0, 0, \pm 9], \quad [1, 0, 0, 0, 0, 1, \pm 3],
$$
that
$H_D^-(\pi_1)$ consists of the following $\widetilde{K}$-types
\begin{eqnarray*}
&[1, 0, 0, 0, 0, 0, 11], \quad [0, 0, 0, 0, 0, 1, -11],
\quad [2, 0, 0, 0, 0, 0, 1], \quad [0, 0, 0, 0, 0, 2, -1], \\
&[0, 0, 0, 0, 1, 0, 5], \quad [0, 0, 1, 0, 0, 0, -5],
\end{eqnarray*}
and that
$$
H_D^+(\pi_2)=E_{[0,0,0,0,0,0,3]}, \quad H_D^-(\pi_2)=E_{[0,0,0,0,0,0,-3]}.
$$

\begin{thm}\label{thm-K-E7-Wallach}
We have that
\begin{equation}\label{K-E7-Wallach-1}
L(-4\zeta)|_K \cong \bigoplus_{f\in\bbN} E_{[0,0,0,0,0, f, -2f-12]}.
\end{equation}
and that
\begin{equation}\label{K-E7-Wallach-2}
L(-8\zeta)|_K \cong \bigoplus_{a,f\in\bbN} E_{[a, 0, 0, 0, 0, f, -4a-2f-24]}.
\end{equation}
\end{thm}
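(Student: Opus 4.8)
The plan is to apply Theorem \ref{thm-HPZ} (in its highest-weight form) to $\pi_1 = L(-4\zeta)$ and $\pi_2 = L(-8\zeta)$, exactly as was done for $E_{6(-14)}$ in the previous section. The starting data are the $\widetilde K$-types of $H_D^\pm(\pi_i)$ listed above. For each index $\xi$ occurring in $H_D^+(\pi_i)$ and each $\eta$ in $H_D^-(\pi_i)$, I need the complete $K$-type decomposition of the generalized Verma modules $N(\xi-\delta_n)$ and $N(\eta-\delta_n)$; by \eqref{gen-Verma-k} this is $S(\frp^-)\otimes E_{\xi-\delta_n}$ as a $\frk$-module, and the $\frk$-structure of $S(\frp^-)$ is the explicit multiplicity-free sum $\bigoplus_{a,f,n\in\bbN} E_{[a,0,0,0,0,f,-4a-2f-6n]}$ recorded above. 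So the task reduces to decomposing tensor products of the form $E_{[a,0,0,0,0,f,\ast]}\otimes E_{[\nu_1,\dots,\nu_6,\ast]}$ over $\frk$, where $\frk$ is of type $E_6$ (with the central $\zeta$-coordinate bookkept additively and playing no role in the root-theoretic branching). Every highest weight $\xi-\delta_n$ and $\eta-\delta_n$ appearing here has its first six coordinates of the shape $[a,0,0,0,0,f]$ up to a single "small" perturbation, so the relevant decompositions are precisely the ones assembled in Section \ref{sec-E6-tensor}: Lemmas \ref{lemma-E6-100000}--\ref{lemma-E6-100001} cover the tensor products with $E_{[1,0,\dots,0]}$, $E_{[0,\dots,0,1]}$, $E_{[2,0,\dots,0]}$, $E_{[0,\dots,0,2]}$, $E_{[1,0,\dots,0,1]}$ and the adjoint-type pieces $E_{[0,1,0,0,0,0]}$, $E_{[0,0,1,0,0,0]}$, $E_{[0,0,0,0,1,0]}$, while the trivial $\frk$-type contributes $S(\frp^-)$ itself unchanged.

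Concretely I would proceed as follows. First, for $\pi_2=L(-8\zeta)$: here $H_D^+(\pi_2)=E_{[0,\dots,0,3]}$ and $H_D^-(\pi_2)=E_{[0,\dots,0,-3]}$, so $\xi-\delta_n$ and $\eta-\delta_n$ have trivial $\frt_-^*$-part and differ only in the $\zeta$-coordinate ($3-27=-24$ versus $-3-27=-30$). Then $N(\xi-\delta_n)=\bigoplus_{a,f,n} E_{[a,0,0,0,0,f,-4a-2f-6n-24]}$ and $N(\eta-\delta_n)$ is the same sum with $-30$ in place of $-24$; after the shift $n\mapsto n+1$ the latter is a sub-series of the former, and the difference \eqref{eq:HPZthmA} collapses to exactly the $n=0$ terms, giving $L(-8\zeta)|_K\cong\bigoplus_{a,f\in\bbN}E_{[a,0,0,0,0,f,-4a-2f-24]}$. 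For $\pi_1=L(-4\zeta)$ the bookkeeping is heavier: there are three $\xi$'s and six $\eta$'s, so I would tabulate (as in Tables \ref{table-E6-Wallach} and \ref{table-E6-non-Wallach}) the full list of $K$-types of all nine modules $N(\xi-\delta_n)$, $N(\eta-\delta_n)$ using the Section \ref{sec-E6-tensor} lemmas, being careful with the $\widetilde{E}$-convention (a summand is genuine only when its first six labels are $\geq 0$) and with the boundary corrections when $a=0$ or $f=0$ that appear in Lemmas \ref{lemma-E6-010000}--\ref{lemma-E6-100001}. One then checks that all rows of the table cancel in pairs except for one surviving family, which after the $+ \delta_n = 9\zeta$ correction (equivalently, restoring the $-\delta_n$ shift) yields $\bigoplus_{f\in\bbN}E_{[0,0,0,0,0,f,-2f-12]}$.

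The main obstacle is the sheer combinatorial volume and the sign/boundary bookkeeping in the $\pi_1$ computation: one must correctly match the $\widetilde K$-types in $H_D^\pm(\pi_1)$ to the appropriate tensor-decomposition lemma, track the central $\zeta$-coordinate through each tensor product, discard non-dominant $\widetilde E$-terms, apply the $a=0$/$f=0$ subtractions, and then verify that the resulting (infinite but explicitly parametrized) alternating sum telescopes to a single multiplicity-free family. No genuinely new idea is needed beyond what Section \ref{sec-E6} already illustrates; the risk is purely arithmetic error, which I would control by a \texttt{SageMath} cross-check of the final $K$-spectra \eqref{K-E7-Wallach-1}, \eqref{K-E7-Wallach-2} against low-degree truncations of $L(-4\zeta)$ and $L(-8\zeta)$ computed in \texttt{atlas}, and against the classification in \cite{EHW}.
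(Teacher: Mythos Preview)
Your proposal is correct and follows essentially the same strategy as the paper's proof. Two small corrections worth flagging: first, $H_D^+(\pi_1)$ contains \emph{six} $\widetilde K$-types, not three---the $\pm$ signs in $[0,0,0,0,0,0,\pm15]$, $[0,1,0,0,0,0,\pm9]$, $[1,0,0,0,0,1,\pm3]$ give six distinct weights, so there are twelve generalized Verma modules to tabulate in total, not nine; second, since you already compute $N(\xi-\delta_n)$ directly (as you correctly do for $\pi_2$), no further ``$+\delta_n$ correction'' is needed at the end for $\pi_1$. The paper organizes the large $\pi_1$ cancellation by sorting all occurring $K$-types according to the middle coordinates $(\mu_2,\mu_3,\mu_4,\mu_5)$, which takes eleven distinct values; within each such block the cancellation is then a short table---this bookkeeping device makes the telescoping much easier to verify by hand than an unsorted list.
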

\begin{proof}
The formula \eqref{K-E7-Wallach-2} is easy to deduce. We focus on \eqref{K-E7-Wallach-1}.
Firstly, note that a $K$-type in the positive part can not cancel with a $K$-type in the negative part if they differ from each other in certain coordinates. This hints us to organize all the involved $K$-types according to their second, third, fourth and fifth coordinates.

For instance, Table \ref{table-E7-Wallach-0000} collects all the $K$-types $\mu=[\mu_1, \dots, \mu_7]$ such that $\mu_2=\mu_3=\mu_4=\mu_5=0$.
In this table, the term $(2,-1,3,9)$ in the LHS of the second row stands for the $K$-types
$$
[a+\textbf{2},0,0,0,0,f\textbf{-1},\textbf{3}-4a-2f-6n]=[a',0,0,0,0,f',\textbf{9}-4a'-2f'-6n],
$$
where $a'=a+2$ and $f'=f-1$, with $a, f\in\bbN$. Collecting the bolded numbers gives $(2,-1,3,9)$. Thus it completely cancels out with the RHS term $(2,0,1,9)$, which stands for the $K$-types
$$
[a+\textbf{2},0,0,0,0,f+\textbf{0}, \textbf{1}-4a-2f-6n]=[a',0,0,0,0,f,\textbf{9}-4a'-2f-6n],
$$
where $a'=a+2$, with $a, f\in\bbN$. Whenever $a$ or $f$ runs over some range other than $\bbN$, we will explicitly write it out.

In Table \ref{table-E7-Wallach-0000}, almost each LHS completely cancels out with its RHS counterpart, except for the first row and the fourth row. In the first row, after cancellation with the RHS, the LHS leaves us with
\begin{equation}\label{0000-pos}
[0, 0, 0, 0, 0, f, 15-2f-6n],
\end{equation}
where $a=0, f\in\bbN$. In the fourth row, after cancellation with the LHS, the RHS leaves us with $[0, 0, 0, 0, 0, f-1, 11-2f-6n]$, where $a=0, f\in\bbN$.
Note that
$$
[0, 0, 0, 0, 0, f-1, 11-2f-6n]=[0,0,0,0,0,f', 9-2f'-6n],
$$
where $f'=f-1$ runs over $\bbN$ as well. This term further cancels out with \eqref{0000-pos}, and eventually we are left with $[0,0,0,0,0,f,15-2f]$, where $f\in\bbN$, in the LHS.

When $(\mu_2, \mu_3, \mu_4, \mu_5)$ take other values, one can proceed similarly to conclude that the positive part completely cancels out with the negative part. Here we only present the $(0,0,1,0)$ case in Table \ref{table-E7-Wallach-0010}, and the $(1,0,0,1)$ case in Table \ref{table-E7-Wallach-1001}. Other eight cases are omitted.

Finally, subtracting $-\rho_n=-3\zeta=[0,0,0,0,0,-27]$ leads us to \eqref{K-E7-Wallach-1}.
\end{proof}

\clearpage
\begin{table}
\caption{Involved $K$-types $\mu$ for the Wallach module $L(-4\zeta)$ of $E_{7(-25)}$
\\ with $(\mu_2,\mu_3,\mu_4,\mu_5)=(0,0,0,0)$}
\begin{tabular}{cc}
the $K$-types of $N(\xi)$  &  the $K$-types of $N(\eta)$  \\
\hline
$*(0,0,15,15)$ & $(1,0,11,15)$  \\
$(2,-1,3,9)$ & $(2,0,1,9)$\\
$(0,0,9,9), f\geq 1$&$(-1,1,11,9)$\\
$(0,0,9,9), a\geq 1$&$*(0,-1,11,9)$\\
$(1,1,3,9)$&$(1,0,5,9), f\geq 1$\\
$(1,-2,3,3)$&$(1,-1,1,3)$\\
$(2,-1,-3,3)$&$(2,-2,-1,3)$\\
$(0,0,3,3), f\geq 1$&$(-1,1,5,3)$\\
$(0,0,3,3), a \geq 1$&$(0,-1,5,3), a\geq 1$\\
$(0,0,3,3), a,f \geq 1$&$(0,1,1,3), a\geq 1$\\
$(1,1,-3,3)$&$(1,0,-1,3), f\geq 1$\\
$(-1,2,3,3)$&$(0,2,-1,3)$\\
$(1,-2,-3,-3)$&$(0,-1,-1,-3), a\geq 1$\\
$(-1,-1,3,-3)$&$(0,-2,1,-3)$\\
$(0,0,-3,-3), f\geq 1$&$(-1,0,1,-3), f\geq 1$\\
$(0,0,-3,-3), a\geq 1$&$(1,-1,-5,-3)$\\
$(0,0,-3,-3), a, f \geq 1$&$(0,1,-5,-3), a\geq 1$\\
$(-2,1,3,-3)$&$(-1,1,-1,-3)$\\
$(-1,2,-3,-3)$&$(-2,2,1,-3)$\\
$(0,0,-9,-9), f\geq 1$&$(-1,0,-5,-9), f\geq 1$\\
$(0,0,-9,-9), a\geq 1$&$(1,-1,-11,-9)$\\
$(-1,-1,-3,-9)$&$(-2,0,-1,-9)$\\
$(-2,1,-3,-9)$&$(0,1,-11,-9)$\\
$(0,0,-15,-15)$&$(-1,0,-11,-15)$\\
\hline
\end{tabular}
\label{table-E7-Wallach-0000}
\end{table}

\clearpage
\begin{table}
\caption{Involved $K$-types $\mu$ for the Wallach module $L(-4\zeta)$ of $E_{7(-25)}$
\\ with $(\mu_2,\mu_3, \mu_4,\mu_5)=(0,0,1,0)$ }
\begin{tabular}{cc}
the $K$-types of $N(\xi)$  &  the $K$-types of $N(\eta)$  \\
\hline
$(-1,-1,9,3)$ & $(0,-1,5,3)$\\
$(-1,-1,3,-3)$&$(-2,0,5,-3)$\\
$(-1,-1,3,-3)$&$(-1,-2,5,-3)$\\
$(-1,-1,-3,-9)$&$(0,-2,-5,-9)$\\
$(-1,-1,-3,-9)$&$(-1,0,-5,-9)$\\
$(-1,-1,-9,-15)$&$(-2,-1,-5,-15)$\\
\hline
\end{tabular}
\label{table-E7-Wallach-0010}
\end{table}

\begin{table}
\caption{Involved $K$-types $\mu$ for the Wallach module $L(-4\zeta)$ of $E_{7(-25)}$
\\ with $(\mu_2,\mu_3,\mu_4,\mu_5)=(1,0,0,1)$ }
\begin{tabular}{cc}
the $K$-types of $N(\xi)$  &  the $K$-types of $N(\eta)$  \\
\hline
$(0,-2,3,-1)$&$(-1,-1,5,-1)$ \\
$(-2,-1,3,-7)$&$(-1,-2,1,-7)$ \\
$(0,-2,-3,-7)$&$(-1,-1,-1,-7)$ \\
$(-2,-1,-3,-13)$&$(-1,-2,-5,-13)$ \\
\hline
\end{tabular}
\label{table-E7-Wallach-1001}
\end{table}

\section*{Fundings}
Dong is supported by NSFC grant 12171344.
Luan is supported by the Shandong Provincial Natural Science Foundation under Grant ZR2022QA056.

\section*{Acknowledgements}
Luan thanks helpful discussions with Dr. Xuanzhong Dai of Kyoto University.

\end{document}